
\documentclass[12pt]{amsart}
\usepackage{amssymb}
\textwidth=13.5cm \textheight =24cm \hoffset=-1cm
 \theoremstyle{plain}
\newtheorem{theorem}{Theorem}
\newtheorem{corollary}{Corollary}

\newtheorem{lemma}{Lemma}
\newtheorem{proposition}{Proposition}

\theoremstyle{definition}

\theoremstyle{remark}

\numberwithin{equation}{section}

\setbox0=\hbox{$+$}

\newdimen\plusheight
\plusheight=\ht0
\def\+{\;\lower\plusheight\hbox{$+$}\;}

\setbox0=\hbox{$-$}
\newdimen\minusheight
\minusheight=\ht0
\def\-{\;\lower\minusheight\hbox{$-$}\;}

\setbox0=\hbox{$\cdots$}
\newdimen\cdotsheight
\cdotsheight=\plusheight
\def\cds{\lower\cdotsheight\hbox{$\cdots$}}
\begin{document}
\baselineskip=17pt
\title[ Some more Long Continued Fractions, I ]
       { Some more Long Continued Fractions, I}

\author{James Mc Laughlin}
\address{Mathematics Department\\
 Anderson Hall\\
West Chester University, PA 19383} \email{jmclaughl@wcupa.edu}

\author{Peter Zimmer}
\address{Mathematics Department\\
 Anderson Hall\\
West Chester University,  PA 19383} \email{pzimmer@wcupa.edu}

\thanks{The authors would like to thank the referee of an earlier
version of this paper for pointing out a number of relevant
papers, of which they were previously unaware.}
 \keywords{ Continued Fractions, Pell's Equation, Quadratic Fields}
 \subjclass[2000]{Primary: 11A55. Secondary: 11R27.}
\date{\today}
\begin{abstract}
In this paper we show how to construct several infinite families
of polynomials $D(\bar{x},k)$, such that $\sqrt{D(\bar{x},k)}$ has
a regular continued fraction expansion with arbitrarily long
period, the length of this period being controlled by the positive
integer parameter $k$.

We also describe how to quickly compute the fundamental units in the
corresponding real quadratic fields.

\end{abstract}

\maketitle

\section{Introduction}
Let  $D(x_1,x_2,\dots, x_r, k)$ be a polynomial in
$\mathbb{Z}[x_1,x_2,\dots, x_r]$, where $x_1$, $x_2$,$\dots, x_r$
are integer variables and $k$ is a positive integer parameter
which appears as an exponent in the expression of
$D(x_1,x_2,\dots, x_r, k)$. Suppose further that there are
positive lower bounds $T_1$ and $T_2$ such that, for all integral
$x_i\geq T_1$ and all integral $k\geq T_2$, the surd
$\sqrt{D(x_1,x_2,\dots, x_r, k)}$ has a regular continued fraction
of the form
\[
\sqrt{D(x_1,x_2,\dots, x_r, k)}=[\,a_{0}; \overline{a_{1}, \cdots
, a_{n_k},2a_{0}}], \] where each $a_{j}:=a_{j}(x_{1},\cdots,
x_{r},k)\in \mathbb{Z}[x_{1},\cdots, x_{r}]$, for $j=0,1,\dots
n_k$, and the length of the period, $n_k+1$, depends only on $k$
($k$ may also be present in some of the $a_j$ as an exponent).
Under these circumstances we say that $D(x_1,x_2,\dots, x_r, k)$
has a \emph{long continued fraction expansion} and call the
expansion $[\,a_{0}; \overline{a_{1}, \cdots , a_{n_k},2a_{0}}]$ a
\emph{long continued fraction}.

We give the following example due to Madden \cite{M01} to
illustrate the concept:\\
Let $b$,  $n$ and $k$ be positive integers. Set
\[
D:=D(b,n,k):=
   {\left(  b\,{\left( 1 + 2\,b\,n \right) }^k
        +n \right)  }^2 + 2\,{\left( 1 + 2\,b\,n \right) }^k.
\]
Then  {\allowdisplaybreaks
\begin{align*}
\sqrt{D}=\bigg[b&{\left( 1 + 2bn \right) }^k +n;\overline{ b,
  2b{\left( 1 + 2bn \right) }^{k-1},}\overline{b(2bn+1),
  2b{(2bn+1)}^{k-2},}\\&
\overline{ b(2bn+1)^2,
  2b{(2bn+1)}^{k-3},\,\,\dots,\,\,}
\overline{b{(1 + 2bn)}^{k-1},2b,}\\
&\overline{n+b{(2bn+1)}^k,}
\\
&\overline{2b,b{(1 + 2bn)}^{k-1},\,\,\dots,\,\,}
\overline{2b{(2bn+1)}^{k-3},b(2bn+1)^2,}\\
&\overline{2b{(2bn+1)}^{k-2},b(2bn+1),}
\overline{2b{\left( 1 + 2bn \right) }^{k-1},b,}
\overline{2( n + b{\left( 1 + 2bn \right) }^k)} \bigg ].
\end{align*}
}The fundamental period in the continued fraction expansion has
length $4k+2$.

Since the discovery of Daniel Shanks \cite{S69} and \cite{S71},
there have been a number of examples of families of quadratic
surds whose continued fraction expansions have unbounded period
length. These include those discovered by Hendy \cite{H74},
Bernstein \cite{B76} and \cite{B76a}, Williams \cite{W85},
\cite{W95}  and \cite{W00} , Levesque and
 Rhin \cite{LR86}, Azuhatu \cite{A87}, Levesque \cite{L88},
Halter-Koch \cite{H89a} and \cite{H89b}, Mollin and Williams
\cite{MW92a}  and \cite{MW92b},
 van der Poorten
 \cite{VDP94}, and, more recently, Madden \cite{M01} and Mollin
 \cite{M03}.

 Williams paper \cite{W85} contains several tables
 listing surds with long continued fraction expansions, along with
 the length of their fundamental period. In Mollin and Williams
\cite{MW92a} and \cite{MW92b} and Williams \cite{W95} and
\cite{W00}, the authors describe a general method for computing
the fundamental period $\pi (N)$ of  the continued fraction
expansion of $\sqrt{N}$, where
\[
N= (\sigma(q r a^n+\mu(a^k+\lambda)/q)/2)^2-\sigma^2 \mu \lambda
a^n r.\] Here $\mu, \lambda \in \{-1,1\}$, $\sigma \in \{1,2\}$
(the value of $\sigma$ depends on the parity of $q r
a^n+\mu(a^k+\lambda)/q$), $q r|a^k+\lambda$, $\gcd (n,k)=1$, and
$n>k\geq 1$.
 To describe this method we need some additional notation.
 For positive integers $r>1$ and $s$, let the regular
continued fraction expansion of
\[s/r=[q_0;q_1,\dots,q_m],\]
 with $q_m>1$. Define
$M(r,s):=2\lfloor (m+1)/2 \rfloor $. For a fixed $Q$ and $a$ with
$\gcd (a,Q) =1$, denote by $s_i$ the integer satisfying
\[
s_i \equiv a^i (\mod Q),
\]
with $1 \leq s_i <Q$. Let $\omega = \omega (a,Q)$ denote the
multiplicative order of $a$ modulo $Q$ and define \[
W(a,q)=\sum_{i=1}^{\omega}M(s_i,Q).
\]
The authors give various formulae for $\pi(N)$, formulae that depend
on the various parameters and the functions $W(\cdot,\cdot)$ and
$\omega(\cdot,\cdot)$.

\vspace{10pt}

 In this present paper we use matrix methods (based on Madden's method
in \cite{M01}) to
 derive several  infinite families of  quadratic surds with
 long continued fraction expansions.
 We feel that, while these matrix methods may not be as widely
 applicable, where they can be used
 the proofs are more transparent, more direct and less intricate
 than proofs by the method outlined above in the papers of Mollin
 and Williams.
Furthermore, some of the families of long continued fractions in
the present paper  generalize some of the results in
 some of the papers cited previously in this introduction.

 \section{preliminaries}
We first recall some basic properties of continued fractions. For
any sequence of numbers $a_{0}, a_{1},
 \dots$, define, for $i\geq 0$, the numbers $A_{i}$ and $B_{i}$
 (the $i$-th numerator convergent and $i$-th denominator convergent,
 respectively, of the continued fraction below)
 by
 \[
a_{0} \+ \frac{1}{a_{1}} \+ \frac{1}{a_{2}} \+ \cds \frac{1}{a_{i}
} = \frac{A_{i}}{B_{i}}.
\]
By the correspondence between matrices and continued fractions, we
have that
\begin{equation}\label{mateq2}
\left(
\begin{matrix}
a_{0} & 1 \\
1     & 0
\end{matrix}
\right) \left(
\begin{matrix}
a_{1} & 1 \\
1     & 0
\end{matrix}
\right) \dots \left(
\begin{matrix}
a_{i} & 1 \\
1     & 0
\end{matrix}
\right)  =\left(
\begin{matrix}
A_{i} & A_{i-1} \\
B_{i}     & B_{i-1}
\end{matrix}
\right).
\end{equation}
It is also well known that
\begin{align}\label{recur}
A_{i} &= a_{i}A_{i-1} + A_{i-2}, \\
B_{i} &= a_{i}B_{i-1} + B_{i-2}.\notag
\end{align}
See \cite{LW92}, for example, for these basic properties of
continued fractions.

 Our starting point is the following elementary result.
 \begin{lemma}\label{l1}
 Let $q_{0}, q_{1}, \dots, q_{1}, q_{0}$ be a finite palindromic sequence
 of positive integers (with or without a central term) and let
 \begin{equation}\label{mateq}
 \left(
\begin{matrix}
q_{0} & 1 \\
1     & 0
\end{matrix}
\right) \left(
\begin{matrix}
q_{1} & 1 \\
1     & 0
\end{matrix}
\right) \dots \left(
\begin{matrix}
q_{1} & 1 \\
1     & 0
\end{matrix}
\right) \left(
\begin{matrix}
q_{0} & 1 \\
1     & 0
\end{matrix}
\right) =:\left(
\begin{matrix}
w & u \\
u     & v
\end{matrix}
\right).
 \end{equation}
 Then
 \begin{equation}
 \sqrt{w/v}=[q_{0}; \overline{q_{1},q_{2}, \dots q_{2},
 q_{1},2q_{0}}].
 \end{equation}
 \end{lemma}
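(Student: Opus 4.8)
The plan is to read the matrix product in \eqref{mateq} through the dictionary \eqref{mateq2} and then identify $\sqrt{w/v}$ as a reduced quadratic irrational carrying the asserted purely periodic tail. First I would record the structural fact that makes the statement sensible: each factor $\left(\begin{smallmatrix} q_j & 1 \\ 1 & 0\end{smallmatrix}\right)$ is symmetric, so the transpose of the product in \eqref{mateq} reverses the order of the factors; since the index sequence $q_0,q_1,\dots,q_1,q_0$ is a palindrome, the reversed product equals the original, and the product is therefore symmetric. This justifies writing it as $\left(\begin{smallmatrix} w & u \\ u & v\end{smallmatrix}\right)$, and via \eqref{mateq2} it identifies $w=A_N$, $u=A_{N-1}=B_N$, $v=B_{N-1}$, where $N+1$ is the number of factors and $A_i/B_i$ are the convergents of $[q_0;q_1,\dots,q_1,q_0]$.

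Next I would reduce the claim to a statement about a purely periodic continued fraction. Writing $\alpha:=[q_0;\overline{q_1,\dots,q_1,2q_0}]$, a standard manipulation (peeling off the leading $q_0$ and folding $2q_0=q_0+q_0$ back into the period) shows that $\alpha+q_0=[\overline{2q_0,q_1,\dots,q_1}]$ is purely periodic. So it suffices to prove that $q_0+\sqrt{w/v}$ equals this purely periodic value, i.e. that it is the attracting fixed point of the M\"obius map attached to the period matrix $S:=\left(\begin{smallmatrix}2q_0 & 1\\ 1 & 0\end{smallmatrix}\right)Q$, where $Q$ is the inner palindromic product corresponding to $q_1,\dots,q_1$.

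To make this explicit I would split the product in \eqref{mateq} as $M_0\,Q\,M_0$ with $M_0=\left(\begin{smallmatrix}q_0 & 1\\ 1&0\end{smallmatrix}\right)$ and $Q=\left(\begin{smallmatrix}p & r\\ r & s\end{smallmatrix}\right)$ (again symmetric). Multiplying out gives $v=p$, $u=q_0p+r$, and $w=q_0^2p+2q_0r+s$. The fixed-point equation for $S$ reads $p\,t^2-2q_0p\,t-(2q_0r+s)=0$, whose larger root is $q_0+\sqrt{(q_0^2p+2q_0r+s)/p}=q_0+\sqrt{w/v}$; thus $q_0+\sqrt{w/v}$ is exactly the candidate fixed point. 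Irrationality of $\sqrt{w/v}$ is then immediate from the determinant of \eqref{mateq}: $wv-u^2=(-1)^{N+1}=\pm1$, so $w/v=(u^2\pm1)/v^2$ is a rational square only if $u^2\pm1$ is a perfect square, which is impossible for $u\ge1$.

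The one genuine obstacle is verifying that $\xi:=q_0+\sqrt{w/v}$ is \emph{reduced}, since by Galois' theorem this is what guarantees the purely periodic expansion is precisely $[\overline{2q_0,q_1,\dots,q_1}]$, with the stated partial quotients and in the stated order, rather than some shifted or distinct period. Here I would use the monotonicity of convergents to bound the entries of $Q$ by $0\le r<p$ and $0<s\le p$, whence $0<2q_0r+s<2q_0p+p$; these two inequalities give $q_0<\sqrt{w/v}<q_0+1$, that is $\xi>1$ and $-1<\bar\xi=q_0-\sqrt{w/v}<0$. Reducedness then forces pure periodicity, the period matrix is $S$ by construction, and so $\alpha+q_0=\xi$, i.e. $\sqrt{w/v}=[q_0;\overline{q_1,\dots,q_1,2q_0}]$, as claimed.
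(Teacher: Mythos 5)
Your argument is correct in substance, but it takes a genuinely different route from the paper's. The paper argues in the ``forward'' direction: it sets $\alpha=[q_{0};\overline{q_{1},\dots,q_{1},2q_{0}}]$, observes that $\alpha=[q_{0};q_{1},\dots,q_{1},q_{0}+\alpha]$, and reads off $\alpha=(w+\alpha u)/(u+\alpha v)$ directly from \eqref{mateq2} and \eqref{recur}, so that $\alpha^{2}=w/v$ and $\alpha=\sqrt{w/v}$ since $\alpha>0$ (convergence of a continued fraction with positive integer partial quotients being taken for granted). You instead work backward from the surd: you exhibit $q_{0}+\sqrt{w/v}$ as the larger fixed point of the period map and then invoke Galois' theorem after checking reducedness. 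Your route is heavier but buys something the paper's two lines elide, namely that the expansion of $\sqrt{w/v}$ is \emph{exactly} the displayed word and not some shift or rearrangement of it; the paper's route is shorter and needs no theory of reduced quadratic irrationals. Two small inaccuracies in your version, neither fatal: the bounds $0\le r<p$ and $0<s\le p$ fail in degenerate cases (for instance when the inner word is the single letter $1$, so that $Q=\left(\begin{smallmatrix}1&1\\1&0\end{smallmatrix}\right)$ and $r=p=1$, $s=0$), but the inequality you actually use, $0<2q_{0}r+s<(2q_{0}+1)p$, survives because $ps-r^{2}=\pm1$ rules out $r=s=0$ and $r=p$ forces $s<p$; and in the irrationality step $u^{2}-1$ \emph{is} a perfect square when $u=1$, a case you should dismiss by noting it would force $wv=0$, contradicting $w,v>0$.
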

 \begin{proof}
 As usual, $\overline{q_{1},q_{2}, \dots q_{2},
 q_{1},2q_{0}}$ means that the sequence of partial quotients
 $q_{1},q_{2}, \dots q_{2},
 q_{1},2q_{0}$ is repeated infinitely often.
 Note  that the matrix on the right side of \eqref{mateq} is
 symmetric, since the left side is a symmetric product of
 symmetric matrices.

Let $\alpha =[q_{0}; \overline{q_{1},q_{2}, \dots q_{2},
 q_{1},2q_{0}}]$, so that
 $\alpha =[q_{0}; q_{1},q_{2}, \dots q_{2},
 q_{1},q_{0}+\alpha]$.
Then, by \eqref{mateq}, \eqref{mateq2} and \eqref{recur}, we have
that
\begin{align*}
\alpha = \frac{w+ \alpha u}{u+\alpha v},
\end{align*}
so that $\alpha^{2} = w/v$ and the result follows.
\end{proof}
We will be primarily interested in the case where $w/v$ is an
integer. The next result, although entirely trivial, is also
central to what follows.

\begin{lemma}\label{l2}
Let $D$ be any complex number and let $\alpha$ and $\beta$ be any
complex numbers such that $\alpha+\beta \not = 0$. Define the
matrix $P$ by
\[
P=\left(
\begin{matrix}
-\sqrt{D}& \sqrt{D} \\
1     & 1
\end{matrix}
\right)\left(
\begin{matrix}
\alpha & 0 \\
0    & \beta
\end{matrix}
\right)\left(
\begin{matrix}
-\sqrt{D}& 1 \\
\sqrt{D}    & 1
\end{matrix}
\right).
\]
Then
\[
\frac{P_{1,1}}{P_{2,2}} = D.
\]
\end{lemma}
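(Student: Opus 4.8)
The statement is a direct matrix computation, so the plan is simply to multiply out the three factors and read off the two diagonal entries of $P$. First I would group the product by multiplying the diagonal matrix into the rightmost factor, obtaining
\[
\begin{pmatrix} \alpha & 0 \\ 0 & \beta \end{pmatrix}\begin{pmatrix} -\sqrt{D} & 1 \\ \sqrt{D} & 1 \end{pmatrix} = \begin{pmatrix} -\alpha\sqrt{D} & \alpha \\ \beta\sqrt{D} & \beta \end{pmatrix}.
\]
Then I would left-multiply by the first factor and compute only the $(1,1)$ and $(2,2)$ entries, since these are all that the conclusion requires.

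The key observation is the way the occurrences of $\sqrt{D}$ in the two outer matrices pair up. The $(1,1)$ entry comes out as $(-\sqrt{D})(-\alpha\sqrt{D}) + (\sqrt{D})(\beta\sqrt{D}) = (\alpha+\beta)D$, while the $(2,2)$ entry is $(1)(\alpha) + (1)(\beta) = \alpha+\beta$. Forming the quotient $P_{1,1}/P_{2,2}$ and cancelling the common factor $\alpha+\beta$ then yields $D$, where the cancellation is legitimate precisely because of the hypothesis $\alpha+\beta\neq 0$.

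There is no genuine obstacle here; the result is, as the authors remark, essentially trivial. The only points requiring any care are to track the signs in the outer matrices so that the cross terms reinforce (rather than cancel) in the $(1,1)$ entry, and to invoke the single hypothesis $\alpha+\beta\neq 0$ solely to make the final division well defined. I would also note in passing that the two off-diagonal entries both equal $\sqrt{D}\,(\beta-\alpha)$, so that $P$ is symmetric; this symmetry presumably dovetails with the symmetric palindromic matrix products that appear in Lemma~\ref{l1}, and is likely the reason this factorization is the useful one to record.
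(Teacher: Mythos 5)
Your computation is correct and is exactly the direct verification the paper has in mind; the authors in fact omit the proof entirely, calling the lemma ``entirely trivial,'' and your entries $P_{1,1}=(\alpha+\beta)D$, $P_{2,2}=\alpha+\beta$, and $P_{1,2}=P_{2,1}=\sqrt{D}(\beta-\alpha)$ all check out. Your closing remark about the symmetry of $P$ is a sound observation that correctly anticipates how Lemma~\ref{l2} is combined with Lemma~\ref{l1} in the later propositions.
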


To investigate the fundamental units in the corresponding real
quadratic fields $\mathbb{Q}(\sqrt{D})$, there is the following
theorem on page 119 of~\cite{N90}:
\begin{theorem}\label{narth}
Let $D$ be a square-free, positive rational integer and let
$K=\mathbb{Q}(\sqrt{D})$. Denote by $\epsilon_{0}$ the fundamental
unit of $K$ which exceeds unity, by $s$ the period of the
continued fraction expansion for $\sqrt{D}$, and by $P/Q$ the
($s-1$)-th approximant of it.

If $D \not \equiv 1 \mod{4}$ or $D \equiv 1 \mod{8}$, then
\[\epsilon_{0} = P + Q \sqrt{D}.
\]
However, if $D \equiv 5 \mod{8}$, then
\[\epsilon_{0} = P + Q \sqrt{D}.
\]
or
\[\epsilon_{0}^{3} = P + Q \sqrt{D}.
\]
Finally, the norm of $ \epsilon_{0}$ is positive if the period $s$
is even and negative otherwise.
\end{theorem}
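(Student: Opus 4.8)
The plan is to prove the theorem in two stages: first to identify the period‑end convergent $P+Q\sqrt{D}$ with the fundamental unit of the \emph{order} $\mathbb{Z}[\sqrt{D}]$, and then to compare that order with the full ring of integers $\mathcal{O}_K$, the discrepancy between the two being exactly what produces the cases distinguished by $D \bmod 8$. Throughout, write $\eta := P + Q\sqrt{D}$, where $P/Q$ is the $(s-1)$-th convergent, and let $\epsilon_{0}$ denote the fundamental unit of $K$ exceeding $1$.

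First I would record the classical continued-fraction/Pell correspondence. Writing $\sqrt{D} = [a_{0}; \overline{a_{1},\dots,a_{s-1},2a_{0}}]$ and multiplying out the matrices in \eqref{mateq2} over one period (the inner block $a_{1},\dots,a_{s-1}$ being a palindrome, as in Lemma~\ref{l1}), one uses the determinant relation $A_{i}B_{i-1}-A_{i-1}B_{i}=(-1)^{i+1}$ from \eqref{recur} together with the symmetry $A_{s-2}=B_{s-1}$ forced by the palindrome to obtain $P^{2} - D\,Q^{2} = (-1)^{s}$; that is, $N(\eta) = (-1)^{s}$. Moreover the convergents at indices $s-1,\,2s-1,\,3s-1,\dots$ exhaust the positive solutions of $x^{2}-Dy^{2}=\pm1$, and the pair at index $ns-1$ is exactly $\eta^{n}$; hence $\eta$ is the smallest unit exceeding $1$ in $\mathbb{Z}[\sqrt{D}]$, i.e.\ the fundamental unit of that order. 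The norm-sign assertion will follow once the case analysis below shows $\eta=\epsilon_{0}$ or $\eta=\epsilon_{0}^{3}$: in either case $\eta$ is an \emph{odd} power of $\epsilon_{0}$, so $N(\epsilon_{0})=N(\eta)=(-1)^{s}$, which is positive precisely when $s$ is even.

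Next comes the comparison of orders. If $D \not\equiv 1 \pmod 4$ then $\mathcal{O}_K = \mathbb{Z}[\sqrt{D}]$, so $\epsilon_{0} = \eta = P + Q\sqrt{D}$ with nothing further to prove. Suppose instead $D \equiv 1 \pmod 4$, so that $\mathcal{O}_K = \mathbb{Z}[(1+\sqrt{D})/2]$ and $\mathbb{Z}[\sqrt{D}]$ sits inside it with conductor $2$. Write $\epsilon_{0} = (a + b\sqrt{D})/2$ with $a \equiv b \pmod 2$ and $a^{2} - D b^{2} = \pm 4$. If $a,b$ are both even then $\epsilon_{0} \in \mathbb{Z}[\sqrt{D}]$; being the smallest unit exceeding $1$ in $\mathcal{O}_K$, it is a fortiori the smallest such unit lying in the suborder, so $\epsilon_{0} = \eta = P + Q\sqrt{D}$.

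The remaining case, $a,b$ both odd, is where the main work lies. Reducing $a^{2} - Db^{2} = \pm 4$ modulo $8$ and using $a^{2}\equiv b^{2}\equiv 1 \pmod 8$ forces $D \equiv 5 \pmod 8$; in particular this half-integral possibility never arises when $D \equiv 1 \pmod 8$, which is why that subcase is grouped with $D \not\equiv 1 \pmod 4$. A direct computation of $\epsilon_{0}^{2}$ and $\epsilon_{0}^{3}$ then shows that $\epsilon_{0}$ and $\epsilon_{0}^{2}$ still have both half-integer coordinates odd, whereas the two numerators occurring in $\epsilon_{0}^{3}$ become divisible by $4$ (here one uses $D\equiv 5$, $a^{2}\equiv 1$, $b^{2}\equiv 1 \pmod 8$), so $\epsilon_{0}^{3}$ is the least power of $\epsilon_{0}$ lying in $\mathbb{Z}[\sqrt{D}]$. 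Consequently every unit of $\mathbb{Z}[\sqrt{D}]$ exceeding $1$ is a power of $\epsilon_{0}^{3}$, whence $\eta = \epsilon_{0}^{3}$ and $\epsilon_{0}^{3} = P + Q\sqrt{D}$. I expect the genuine obstacle to be the first stage --- pinning down that the period-end convergent really is the fundamental unit of the order, and not merely some proper power of it --- since this rests on the full structure of the continued-fraction algorithm for $\sqrt{D}$; the modular parity bookkeeping of the last case, although it is the crux of the $D \bmod 8$ dichotomy, is routine once set up.
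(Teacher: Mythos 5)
First, a point of comparison: the paper does not prove this statement at all --- Theorem~\ref{narth} is quoted verbatim from page 119 of Narkiewicz~\cite{N90} and used as a black box --- so there is no internal proof to measure yours against; what follows judges your proposal on its own terms. Your overall architecture (identify $\eta = P+Q\sqrt{D}$ with the fundamental unit of the order $\mathbb{Z}[\sqrt{D}]$, then compare that order with $\mathcal{O}_K$) is the standard textbook route, and your second stage is correct and essentially complete: writing $\epsilon_0=(a+b\sqrt{D})/2$, the both-odd case forces $D\equiv 5 \pmod{8}$ (hence cannot occur when $D\equiv 1\pmod{8}$, which is exactly why that case is grouped with $D\not\equiv 1\pmod 4$); when it does occur, $\epsilon_0$ and $\epsilon_0^2$ remain outside $\mathbb{Z}[\sqrt{D}]$ while $\epsilon_0^3$ lands inside, and since the exponents $n$ with $\epsilon_0^n\in\mathbb{Z}[\sqrt{D}]$ form a subgroup of $\mathbb{Z}$, this yields $\eta=\epsilon_0^3$; the norm claim then follows because $\eta$ is in every case an odd power of $\epsilon_0$.

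Your first stage, however, has one concretely false step and one genuine gap. The false step is the claimed symmetry $A_{s-2}=B_{s-1}$: it fails already for $\sqrt{7}=[2;\overline{1,1,1,4}]$, where $s=4$, $A_2=5$ and $B_3=3$. What the palindrome (equivalently, the periodicity relation $\sqrt{D}=[a_0;a_1,\dots,a_{s-1},a_0+\sqrt{D}]$) actually forces is $A_{s-1}=a_0B_{s-1}+B_{s-2}$ together with $DB_{s-1}=a_0A_{s-1}+A_{s-2}$, from which $P^2-DQ^2=A_{s-1}B_{s-2}-A_{s-2}B_{s-1}=(-1)^s$ follows by the determinant identity; so your conclusion is right but the mechanism you cite for it is not. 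The gap is the claim that the convergents at indices $ns-1$ exhaust \emph{all} units exceeding $1$ of $\mathbb{Z}[\sqrt{D}]$, i.e.\ that $\eta$ is the least such unit and not a proper power of it. You correctly flag this as the crux, but you only assert it. It requires two classical ingredients that should be proved or at least cited precisely: (i) any positive solution of $x^2-Dy^2=\pm 1$ has $x/y$ a convergent of $\sqrt{D}$ (Legendre's criterion), and (ii) $A_i^2-DB_i^2=(-1)^{i+1}Q_{i+1}$, where the complete-quotient denominator $Q_{i+1}$ equals $1$ precisely when $s\mid i+1$. With the identity corrected and these inputs supplied (or referenced), your argument becomes the standard proof of the quoted theorem; as written, it is a correct skeleton with one broken rung and one load-bearing appeal to unproved classical theory.
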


This theorem implies the following result.
\begin{proposition}\label{pfu}
Let $D$ be a non-square positive integer, $D \not \equiv 5
(\mod{8})$. Suppose $\sqrt{D}=[q_0;\overline{q_1,\dots, q_1,2
q_0}]$, and that
\begin{multline*}
 \left(
\begin{matrix}
q_{0} & 1 \\
1     & 0
\end{matrix}
\right) \left(
\begin{matrix}
q_{1} & 1 \\
1     & 0
\end{matrix}
\right) \dots \left(
\begin{matrix}
q_{1} & 1 \\
1     & 0
\end{matrix}
\right) \left(
\begin{matrix}
q_{0} & 1 \\
1     & 0
\end{matrix}
\right) =\left(
\begin{matrix}
w & u \\
u     & v
\end{matrix}
\right)\\ = \left(
\begin{matrix}
-\sqrt{D}& \sqrt{D} \\
1     & 1
\end{matrix}
\right)\left(
\begin{matrix}
\alpha & 0 \\
0    & \beta
\end{matrix}
\right)\left(
\begin{matrix}
-\sqrt{D}& 1 \\
\sqrt{D}    & 1
\end{matrix}
\right).
 \end{multline*}
Then the fundamental unit in $\mathbb{Q}(\sqrt{D})$ is $2 \sqrt{D}
\beta$.
\end{proposition}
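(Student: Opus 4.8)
The plan is to reduce everything to Theorem~\ref{narth} together with the two matrix identifications already in hand. First I would invoke Theorem~\ref{narth}: since $D\not\equiv 5\pmod 8$, the hypothesis places us in the case where the fundamental unit exceeding unity is \emph{unambiguously} $\epsilon_0 = P + Q\sqrt{D}$, where $P/Q$ is the $(s-1)$-th convergent of $\sqrt D$ and $s$ is the period length; the condition $D\not\equiv 5\pmod 8$ is precisely what excludes the competing possibility $\epsilon_0^3 = P+Q\sqrt D$. Thus the entire problem collapses to expressing $P$ and $Q$ in terms of the data of the statement and checking that $P + Q\sqrt D = 2\sqrt D\,\beta$.

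Second, I would identify $P$ and $Q$ with the entries $u$ and $v$. Write the period as $a_1,\dots,a_{s-1},a_s$ with $a_s = 2q_0$ and $a_1,\dots,a_{s-1}=q_1,\dots,q_1$ the palindromic interior, so that $a_i = q_i$ for $0\le i\le s-1$. The palindromic product in the statement runs over $q_0,q_1,\dots,q_1,q_0$, which has $s+1$ factors. Splitting off the final factor $\left(\begin{smallmatrix} q_0 & 1 \\ 1 & 0\end{smallmatrix}\right)$ and applying \eqref{mateq2} to the remaining first $s$ factors identifies them with the convergent matrix of $\sqrt D$ through index $s-1$. Carrying out the single remaining multiplication then shows that the $(1,2)$ entry of the full product is $A_{s-1}$ and the $(2,2)$ entry is $B_{s-1}$; since the product is symmetric, these are exactly $u$ and $v$. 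Hence $u = A_{s-1} = P$, $v = B_{s-1} = Q$, and $\epsilon_0 = u + v\sqrt D$.

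Third, I would evaluate the same product using the factorization of Lemma~\ref{l2}. Multiplying out the three matrices yields $w = D(\alpha+\beta)$, $u = \sqrt D(\beta-\alpha)$, and $v = \alpha+\beta$, with the relation $w/v = D$ being exactly the content of Lemma~\ref{l2}. Substituting the latter two into $\epsilon_0 = u+v\sqrt D$ makes the $\alpha$ contributions cancel: $u + v\sqrt D = \sqrt D(\beta-\alpha) + (\alpha+\beta)\sqrt D = 2\beta\sqrt D$, which is the assertion.

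The two matrix multiplications are routine. The step that needs genuine care is the bookkeeping in the second paragraph: one must confirm that the displayed expansion is the canonical continued fraction of $\sqrt D$ (a palindromic interior followed by the terminal $2q_0$), so that $s$ is the \emph{fundamental} period rather than a multiple of it, and that the $(s-1)$-th convergent surfaces precisely as the off-diagonal entry $u$ and the corner entry $v$ of the palindromic product. An off-by-one in the period length here would replace $\epsilon_0$ by a spurious power of itself, so matching the index of the convergent to the period $s$ from Theorem~\ref{narth} is the crux.
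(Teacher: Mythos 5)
Your proposal is correct and is essentially the paper's own (one-sentence) proof written out in full: the paper likewise cites Theorem~\ref{narth} and \eqref{mateq2} to get $\epsilon_0=u+v\sqrt{D}$ and then compares entries of the two matrix factorizations to obtain $2\sqrt{D}\,\beta$. Your explicit computation of the entries ($u=\sqrt{D}(\beta-\alpha)$, $v=\alpha+\beta$) and your bookkeeping identifying $u=A_{s-1}$, $v=B_{s-1}$ via the symmetry of the palindromic product simply supply the details the paper leaves implicit.
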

\begin{proof}
By Theorem \ref{narth} and \eqref{mateq2}, the fundamental unit in
$\mathbb{Q}(\sqrt{D})$ is $u + v \sqrt{D}$, and the equality of
this quantity and
 $2 \sqrt{D} \beta$ follows from comparing corresponding entries
 in the matrices above.
\end{proof}

Remark: Other approaches can be used to calculate the fundamental
unit in the case $D \equiv 5 (\mod{8})$, but we do not pursue that
here.

 We will follow Madden and let $\vec{N}$ denote the
sequence $a_1, a_2, \dots, a_j$ whenever
\[
N=\left(
\begin{matrix}
a_{1} & 1 \\
1     & 0
\end{matrix}
\right) \dots \left(
\begin{matrix}
a_{j} & 1 \\
1     & 0
\end{matrix}
\right),
\]
and let $\overset{\leftarrow}{N}$ denote the sequence $a_j,
a_{j-1}, \dots, a_2, a_1$.

\section{Some General Propositions}
 We next state several variants of a result of Madden from
\cite{M01}. These general propositions will allow us to  construct
specific families of long continued fractions in the next section.

\begin{proposition}\label{p1}
Let $k$, $u$, $v$, $w$ and $r$ be positive integers such that
$rw/v$ is an integer, and let  $x$ be a rational such that $w x$
and $2 v x$ are  integers. Let
\[ C= \left(
\begin{matrix}
r & 0 \\
0    & 1
\end{matrix}
\right).
\]
Suppose further, for each  integer $n \in \{0,1,\dots k-1\}$, that
the matrix $N_n$ defined by
\[
N_n:= C^{-n}\left(
\begin{matrix}
u & r^{k-1}v \\
w     & r u -2 v w x
\end{matrix}
\right)C^n =\left(
\begin{matrix}
u & r^{k-1-n}v \\
r^n w     & r u -2 v w x
\end{matrix}
\right)
\]
has an expansion of the form
\begin{equation}\label{Neq}
N_n=\left(
\begin{matrix}
a_{1}^{(n)} & 1 \\
1     & 0
\end{matrix}
\right)\left(
\begin{matrix}
a_{2}^{(n)} & 1 \\
1     & 0
\end{matrix}
\right) \dots \left(
\begin{matrix}
a_{j_{n}}^{(n)} & 1 \\
1     & 0
\end{matrix}
\right),
\end{equation}
where each $a_{i}^{(n)}$ is a positive integer. Then
\begin{multline}\label{sqrteq}
\sqrt{\frac{w r^k}{v} +w^2x^2} \\= [w x; \overline{
\vec{N}_{k-1}\vec{N}_{k-2}\dots \vec{N}_{1}\vec{N}_{0}, 2 v
x,\overset{\leftarrow}{N}_{0},\overset{\leftarrow}{N}_{1},\dots,
\overset{\leftarrow}{N}_{k-2},\overset{\leftarrow}{N}_{k-1}, 2 w
x}].
\end{multline}

If $w r^k/v+w^2 x^2 \not \equiv 5 (\mod{8})$ and is square free,
then the fundamental unit in $\mathbb{Q}(w r^k/v+w^2 x^2)$ is
\[
\frac{w
    {\left( r\,u + v\,\left( - w\,x   +
           {\sqrt{\frac{w\,
                 \left( r^k + v\,w\,x^2 \right) }{v}}}
           \right)  \right) }^{2\,k}}{{v\left( -
          w\,x  +
       {\sqrt{\frac{w\,\left( r^k + v\,w\,x^2 \right) }
           {v}}} \right) }^2}.
\]
\end{proposition}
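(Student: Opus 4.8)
The plan is to reduce the continued-fraction identity to Lemma~\ref{l1} and the unit formula to Proposition~\ref{pfu}, so that everything hinges on evaluating a single palindromic matrix product. Write $B=\bigl(\begin{smallmatrix}u&r^{k-1}v\\ w&ru-2vwx\end{smallmatrix}\bigr)$ for the central matrix, and $S_0=\bigl(\begin{smallmatrix}wx&1\\1&0\end{smallmatrix}\bigr)$, $\tilde S=\bigl(\begin{smallmatrix}2vx&1\\1&0\end{smallmatrix}\bigr)$ for the two boundary matrices. Since each $\left(\begin{smallmatrix}a&1\\1&0\end{smallmatrix}\right)$ is symmetric, the reversed word $\overset{\leftarrow}{N}_n$ corresponds to the transpose $N_n^{T}$, so the matrix attached to the palindromic sequence in \eqref{sqrteq} is $M=(S_0\Pi)\,\tilde S\,(S_0\Pi)^{T}$, where $\Pi=N_{k-1}N_{k-2}\cdots N_0$. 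Using $N_n=C^{-n}BC^{n}$, the $C$-powers between consecutive factors telescope to collapse $\Pi$ to $C^{-(k-1)}(BC)^{k-1}B$. Because $M$ is symmetric, Lemma~\ref{l1} will deliver the asserted expansion (the integrality and positivity of all partial quotients being exactly the standing hypotheses together with $wx,2vx\in\mathbb Z_{>0}$) as soon as we show $M_{1,1}/M_{2,2}=wr^{k}/v+w^2x^2=:D$.

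To identify $M_{1,1}/M_{2,2}$ I would put $M$ into the form of Lemma~\ref{l2}, namely $M=U\,\mathrm{diag}(\alpha,\beta)\,U^{T}$ with $U=\bigl(\begin{smallmatrix}-\sqrt D&\sqrt D\\1&1\end{smallmatrix}\bigr)$, since then $M_{1,1}/M_{2,2}=D$ is automatic. First diagonalize $BC$: its trace is $2(ru-vwx)$ and its determinant is $r\det B$, so its eigenvalues are $\Lambda_{\pm}=(ru-vwx)\pm v\sqrt D$, with eigenvector matrix $Q=\bigl(\begin{smallmatrix}r^{k-1}&r^{k-1}\\ \sqrt D-wx&-\sqrt D-wx\end{smallmatrix}\bigr)$. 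The pleasant simplification is that the boundary factor and $C^{-(k-1)}$ collapse cleanly: $S_0C^{-(k-1)}Q=\bigl(\begin{smallmatrix}\sqrt D&-\sqrt D\\1&1\end{smallmatrix}\bigr)=UJ$, with $J$ the exchange matrix. Hence $S_0\Pi=U\,\Delta\,G$, where $\Delta=\mathrm{diag}(\Lambda_-^{k-1},\Lambda_+^{k-1})$ and $G:=JQ^{-1}B$, and therefore $M=U\,\Delta\,(G\tilde SG^{T})\,\Delta\,U^{T}$.

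With this reduction the whole problem becomes the single claim that $G\tilde SG^{T}$ is diagonal, and I expect the vanishing of its off-diagonal entry to be the main obstacle. Writing $2r^{k-1}\sqrt D\,G=(\hat g_{ij})$, the second column is clean, $(\hat g_{12},\hat g_{22})=(-r^{k-1}\Lambda_-,\,r^{k-1}\Lambda_+)$, while the first column carries all the dependence on $u,w$. The off-diagonal entry of $G\tilde SG^{T}$ is then a polynomial in $u,v,w,x,r$ which I would collapse to $0$ by substituting $\det B=ru^2-2uvwx-r^{k-1}vw$ together with the identity $D-w^2x^2=wr^{k}/v$; these produce a total cancellation. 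Granting this, $M=U\,\mathrm{diag}(\alpha,\beta)\,U^{T}$ with $\beta=\Lambda_+^{2(k-1)}(G\tilde SG^{T})_{2,2}$, Lemma~\ref{l2} gives $M_{1,1}/M_{2,2}=D$, and Lemma~\ref{l1} finishes the first assertion.

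For the fundamental unit, Proposition~\ref{pfu} applies (as $D$ is square-free and $D\not\equiv 5\pmod 8$) and yields the unit $2\sqrt D\,\beta$. It then remains to simplify $\beta$: using $\hat g_{21}=\Lambda_+(\sqrt D+wx)/r$ and the companion identity $vx\,\hat g_{21}+\hat g_{22}=\Lambda_+ v\sqrt D(\sqrt D+wx)/(rw)$ (both consequences of $vwx^2+r^{k}=vD/w$), one obtains $(G\tilde SG^{T})_{2,2}$ in closed form, and $2\sqrt D\,\beta$ reduces---after clearing $(\sqrt D+wx)(\sqrt D-wx)=wr^{k}/v$---to the stated expression $w\,\Lambda_+^{2k}/\bigl(v(\sqrt D-wx)^2\bigr)$. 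The only genuinely delicate step throughout is the vanishing of the off-diagonal entry of $G\tilde SG^{T}$; everything else is bookkeeping driven by the two substitutions above.
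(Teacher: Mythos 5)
Your proposal is correct and follows essentially the same route as the paper: telescoping $N_{k-1}\cdots N_0$ into $C^{-k}(CN_0)^k$ (your $C^{-(k-1)}(BC)^{k-1}B$ is the same grouping), diagonalizing the core matrix with eigenvalues $ru-vwx\pm v\sqrt D$, and reducing to Lemmas~\ref{l1} and~\ref{l2} and Proposition~\ref{pfu}. Your key claim that $G\tilde SG^{T}$ is diagonal is exactly the paper's third ``one can check'' identity, namely $M^{-1}\tilde S(M^{-1})^{T}=\mathrm{diag}(-w/(2v\sqrt D),\,w/(2v\sqrt D))$, only organized with respect to the eigenvectors of $BC$ rather than $CB$; I verified that the off-diagonal entry does cancel under the substitution $vD=vw^2x^2+wr^k$ as you predict.
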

\begin{proof}
Let $D= w r^k/v +w^2x^2$. We consider the matrix product
\[
\left(
\begin{matrix}
w x & 1 \\
1     & 0
\end{matrix}
\right)N_{k-1} \dots N_1 N_0\left(
\begin{matrix}
2 v x & 1 \\
1     & 0
\end{matrix}
\right)N_0^T N_1^T \dots N_{k-1}^T\left(
\begin{matrix}
w x & 1 \\
1     & 0
\end{matrix}
\right).
\]
By the definition of the $N_n$, this product equals
\[
\left(
\begin{matrix}
w x & 1 \\
1     & 0
\end{matrix}
\right)C^{-k}(CN_{0})^k\left(
\begin{matrix}
2 v x & 1 \\
1     & 0
\end{matrix}
\right)((CN_{0})^k)^{T}C^{-k}\left(
\begin{matrix}
w x & 1 \\
1     & 0
\end{matrix}
\right).
\]
Define the matrix $M$ by
\[
M:=\left(
\begin{matrix}
v x -  \frac{v\sqrt{D}
    }{w}&  v x + \frac{ v\sqrt{D}
    }{w}\\
        \phantom{a}&\phantom{b}\\
1     & 1
\end{matrix}
\right).
\]
One can check that {\allowdisplaybreaks
\begin{align*}
&\left(
\begin{matrix}
w x & 1 \\
1     & 0
\end{matrix}
\right)C^{-k} = \left(
\begin{matrix}
-\sqrt{D}& \sqrt{D} \\
1     & 1
\end{matrix}
\right)\left(
\begin{matrix}
\frac{1}{-w x-\sqrt{D}}& 0 \\
0    &\frac{1}{-w x+\sqrt{D}}
\end{matrix}
\right) M^{-1};\\
&\phantom{as}\\
 &(CN_{0})^k = M \left(
\begin{matrix}
ru-v w x-v\sqrt{D}&  0\\
        \phantom{a}&\phantom{b}\\
0    & ru-v w x+v\sqrt{D}
\end{matrix}
\right)^k M^{-1}; \\
&\phantom{as}\\
&M^{-1}\left(
\begin{matrix}
2 v x & 1 \\
1     & 0
\end{matrix}
\right)(M^{-1})^{T}=\left(
\begin{matrix}
\frac{-w}{2 v \sqrt{D}}&  0\\
        \phantom{a}&\phantom{b}\\
0    &\frac{w}{2 v \sqrt{D}}
\end{matrix}
\right).
\end{align*}
}
 Thus {\allowdisplaybreaks\begin{multline*} \left(
\begin{matrix}
w x & 1 \\
1     & 0
\end{matrix}
\right)C^{-k}(CN_{0})^k\left(
\begin{matrix}
2 v x & 1 \\
1     & 0
\end{matrix}
\right)((CN_{0})^k)^{T}C^{-k}\left(
\begin{matrix}
w x & 1 \\
1     & 0
\end{matrix}
\right)=\\
\left(
\begin{matrix}
-\sqrt{D}& \sqrt{D} \\
1     & 1
\end{matrix}
\right)\phantom{asdsfddsfsdfdfsfsdfsadfasdsafasdfsadfdfsfddsfdf}\\
\times\left(
\begin{matrix}
\frac{1}{-w x-\sqrt{D}}& 0 \\
0    &\frac{1}{-w x+\sqrt{D}}
\end{matrix}
\right)\left(
\begin{matrix}
ru-v w x-v\sqrt{D}&  0\\
        \phantom{a}&\phantom{b}\\
0    & ru-v w x+v\sqrt{D}
\end{matrix}
\right)^k \\
\times\left(
\begin{matrix}
\frac{-w}{2 v \sqrt{D}}&  0\\
        \phantom{a}&\phantom{b}\\
0    &\frac{w}{2 v \sqrt{D}}
\end{matrix}
\right)\\
\times \left(
\begin{matrix}
ru-v w x-v\sqrt{D}&  0\\
        \phantom{a}&\phantom{b}\\
0    & ru-v w x+v\sqrt{D}
\end{matrix}
\right)^k \left(
\begin{matrix}
\frac{1}{-w x-\sqrt{D}}& 0 \\
0    &\frac{1}{-w x+\sqrt{D}}
\end{matrix}
\right) \\
\times \left(
\begin{matrix}
-\sqrt{D}& 1 \\
\sqrt{D}    & 1
\end{matrix}
\right).
\end{multline*}
}

 The result follows by Lemmas \ref{l1}, \ref{l2} and Proposition \ref{narth}.
\end{proof}

The fundamental period of the continued fractions above contain a
central partial quotient, namely $2vx$. We next show how to
construct long continued fractions which do not have a central
partial quotient.

\begin{proposition}\label{p2}
Let $u$, $v$, $x$ and $r$ be positive integers. Let \[ C= \left(
\begin{matrix}
r & 0 \\
0    & 1
\end{matrix}
\right).
\]
Suppose further, for each integer $k\geq 1$ and each integer $n
\in \{0,1,\dots k-1\}$, that the matrix $N_n$ defined by
\[
N_n:= C^{-n}\left(
\begin{matrix}
u & r^{k-1}v \\
r^k v     & r u -2 v  x
\end{matrix}
\right)C^n =\left(
\begin{matrix}
u & r^{k-1-n}v \\
r^{k+n}v     & r u -2 v  x
\end{matrix}
\right)
\]
has an expansion of the form
\begin{equation}\label{Neq2}
N_n=\left(
\begin{matrix}
a_{1}^{(n)} & 1 \\
1     & 0
\end{matrix}
\right)\left(
\begin{matrix}
a_{2}^{(n)} & 1 \\
1     & 0
\end{matrix}
\right) \dots \left(
\begin{matrix}
a_{j_{n}}^{(n)} & 1 \\
1     & 0
\end{matrix}
\right),
\end{equation}
where each $a_{i}^{(n)}$ is a positive integer. Then, for each
integer $k \geq 1$,
\begin{equation}\label{sqrteq2}
\sqrt{ r^{2k} +x^2} = [ x; \overline{
\vec{N}_{k-1}\vec{N}_{k-2}\dots \vec{N}_{1}\vec{N}_{0},
\overset{\leftarrow}{N}_{0},\overset{\leftarrow}{N}_{1},\dots,
\overset{\leftarrow}{N}_{k-2},\overset{\leftarrow}{N}_{k-1}, 2
x}].
\end{equation}
If $r^{2k} + x^2 \not \equiv 5 (\mod{8})$ and is square free, then
the fundamental unit in $\mathbb{Q}(\sqrt{r^{2k} +x^2})$ is
\[
\left( x + {\sqrt{r^{2\,k} + x^2}} \right) \,
  {\left( u + v\,\left( -x +
         {\sqrt{r^{2\,k} + x^2}} \right)/r  \right) }^
   {2\,k}.
   \]
\end{proposition}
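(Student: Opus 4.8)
The plan is to imitate the proof of Proposition \ref{p1}, the only structural novelty being the absence of a central partial quotient. First I would set $D=r^{2k}+x^2$ and consider the palindromic matrix product
\[
P := \begin{pmatrix} x & 1 \\ 1 & 0 \end{pmatrix} N_{k-1}\cdots N_1 N_0\, N_0^T N_1^T \cdots N_{k-1}^T \begin{pmatrix} x & 1 \\ 1 & 0 \end{pmatrix}.
\]
Since each $N_n$ is a product of the symmetric matrices $\begin{pmatrix} a_i^{(n)} & 1 \\ 1 & 0\end{pmatrix}$, the partial-quotient sequence attached to $P$ is the palindrome $x,\vec N_{k-1},\dots,\vec N_0,\overset{\leftarrow}{N}_0,\dots,\overset{\leftarrow}{N}_{k-1},x$, whose centre lies between $\vec N_0$ and $\overset{\leftarrow}{N}_0$. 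Hence the no-central-term case of Lemma \ref{l1} immediately yields $\sqrt{P_{1,1}/P_{2,2}}=[x;\overline{\vec N_{k-1}\cdots\vec N_0,\overset{\leftarrow}{N}_0\cdots\overset{\leftarrow}{N}_{k-1},2x}]$, so it suffices to prove $P_{1,1}/P_{2,2}=D$.

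The key observation, and what replaces the central matrix $\begin{pmatrix}2vx&1\\1&0\end{pmatrix}$ of Proposition \ref{p1}, is that here
\[
CN_0 = \begin{pmatrix} ru & r^k v \\ r^k v & ru-2vx \end{pmatrix}
\]
is symmetric. Writing $N_n=C^{-n}N_0C^{n}$ and telescoping gives $N_{k-1}\cdots N_0=C^{-k}(CN_0)^k$, and since $C$ and $CN_0$ are symmetric its transpose is $(CN_0)^k C^{-k}$; the two halves glue with no intervening factor, so $P=\begin{pmatrix}x&1\\1&0\end{pmatrix}C^{-k}(CN_0)^{2k}C^{-k}\begin{pmatrix}x&1\\1&0\end{pmatrix}$. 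I would then diagonalise $CN_0=M\,\mathrm{diag}(\lambda_-,\lambda_+)\,M^{-1}$; a short computation gives $\lambda_\pm=ru-vx\pm v\sqrt D$ (the discriminant being $v^2D$) and eigenvectors yielding $M=\begin{pmatrix}(x-\sqrt D)/r^k & (x+\sqrt D)/r^k\\ 1 & 1\end{pmatrix}$.

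To cast $P$ into the shape of Lemma \ref{l2} I would verify the single identity
\[
\begin{pmatrix}x&1\\1&0\end{pmatrix}C^{-k} = \begin{pmatrix}-\sqrt D&\sqrt D\\1&1\end{pmatrix}\Lambda\, M^{-1},\qquad \Lambda = \mathrm{diag}\!\left(\tfrac{x-\sqrt D}{r^{2k}},\tfrac{x+\sqrt D}{r^{2k}}\right),
\]
whose transpose supplies the right-hand factor. Substituting and cancelling $M^{-1}M$ leaves $P=\begin{pmatrix}-\sqrt D&\sqrt D\\1&1\end{pmatrix}\bigl(\Lambda\,\mathrm{diag}(\lambda_-^{2k},\lambda_+^{2k})\,M^{-1}(M^{-1})^T\Lambda\bigr)\begin{pmatrix}-\sqrt D&1\\\sqrt D&1\end{pmatrix}$, so Lemma \ref{l2} gives $P_{1,1}/P_{2,2}=D$ \emph{provided the bracketed middle factor is diagonal}. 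This reduces to showing $M^{-1}(M^{-1})^T$ is diagonal, which follows from $(x+\sqrt D)(\sqrt D-x)=D-x^2=r^{2k}$; the hypothesis $\alpha+\beta\neq0$ of Lemma \ref{l2} is then clear since $\sqrt D>x$. Reading off the $(2,2)$ entry gives $\beta=(x+\sqrt D)\lambda_+^{2k}/(2\sqrt D\,r^{2k})$, and Proposition \ref{pfu} delivers the fundamental unit as $2\sqrt D\,\beta=(x+\sqrt D)\lambda_+^{2k}/r^{2k}=(x+\sqrt D)\bigl(u+v(-x+\sqrt D)/r\bigr)^{2k}$, exactly the stated formula.

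The only genuine obstacle is bookkeeping: checking the diagonalisation identity above and, above all, confirming that $\Lambda\,\mathrm{diag}(\lambda_\pm^{2k})\,M^{-1}(M^{-1})^T\Lambda$ is diagonal. Conceptually this is forced by the symmetry of $CN_0$ (equivalently, by the vanishing of the off-diagonal entries of $M^{-1}(M^{-1})^T$), which is precisely the feature that lets the forward and reversed blocks abut without an intervening central quotient, distinguishing this result from Proposition \ref{p1}.
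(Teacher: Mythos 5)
Your proposal is correct and follows essentially the same route as the paper's proof: the same palindromic product, the same diagonalisation of $CN_0$ with eigenvalues $ru-vx\pm v\sqrt D$ and the same matrix $M$, the same three verification identities, and the same appeal to Lemmas \ref{l1} and \ref{l2} and Proposition \ref{pfu} to read off $\sqrt D$ and the fundamental unit from $2\sqrt D\beta$. The only (welcome) addition is your explicit remark that the symmetry of $CN_0$ is what lets the forward and reversed blocks abut without a central factor, a point the paper leaves implicit.
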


\begin{proof}
We proceed as in the proof of Proposition \ref{p1}. Let $D =r^{2k}
+ x^2$. As above, the definition of $N_n$ implies
\begin{multline*}
\left(
\begin{matrix}
 x & 1 \\
1     & 0
\end{matrix}
\right)N_{k-1} \dots N_1 N_0 N_0^T N_1^T \dots N_{k-1}^T\left(
\begin{matrix}
x & 1 \\
1     & 0
\end{matrix}
\right)\\
=
 \left(
\begin{matrix}
x & 1 \\
1     & 0
\end{matrix}
\right)C^{-k}(CN_{0})^k((CN_{0})^k)^{T}C^{-k}\left(
\begin{matrix}
 x & 1 \\
1     & 0
\end{matrix}
\right). \end{multline*}
 Define the matrix $M$ by
\[
M:=\left(
\begin{matrix}
 \frac{x-\sqrt{D}
    }{r^k}&  \frac{ x+\sqrt{D}
    }{r^k}\\
        \phantom{a}&\phantom{b}\\
1     & 1
\end{matrix}
\right).
\]
One can check that {\allowdisplaybreaks
\begin{align*}
&\left(
\begin{matrix}
x & 1 \\
1     & 0
\end{matrix}
\right)C^{-k} = \left(
\begin{matrix}
-\sqrt{D}& \sqrt{D} \\
1     & 1
\end{matrix}
\right)\left(
\begin{matrix}
\frac{x-\sqrt{D}}{r^{2k}}& 0 \\
0    &\frac{x+\sqrt{D}}{r^{2k}}
\end{matrix}
\right) M^{-1};\\
&\phantom{as}\\
 &(CN_{0})^k = M \left(
\begin{matrix}
ru-v  x-v\sqrt{D}&  0\\
        \phantom{a}&\phantom{b}\\
0    & ru-v  x+v\sqrt{D}
\end{matrix}
\right)^k M^{-1}; \\
&\phantom{as}\\
&M^{-1}(M^{-1})^{T}=\left(
\begin{matrix}
\frac{1}{2}+\frac{x}{2 \sqrt{D}}&  0\\
        \phantom{a}&\phantom{b}\\
0    &\frac{1}{2}-\frac{x}{2 \sqrt{D}}
\end{matrix}
\right).
\end{align*}
} Thus {\allowdisplaybreaks\begin{multline*} \left(
\begin{matrix}
x & 1 \\
1     & 0
\end{matrix}
\right)C^{-k}(CN_{0})^k((CN_{0})^k)^{T}C^{-k}\left(
\begin{matrix}
 x & 1 \\
1     & 0
\end{matrix}
\right)=\\
\left(
\begin{matrix}
-\sqrt{D}& \sqrt{D} \\
1     & 1
\end{matrix}
\right)\phantom{asdsfddsfsdfdfsfsdfsadfasdsafasdfsadfdfsfddsfdf}\\
\times \left(
\begin{matrix}
\frac{x-\sqrt{D}}{r^{2k}}& 0 \\
0    &\frac{x+\sqrt{D}}{r^{2k}}
\end{matrix}
\right)\times \left(
\begin{matrix}
r u-v  x-v\sqrt{D}&  0\\
        \phantom{a}&\phantom{b}\\
0    & r u-v  x+v\sqrt{D}
\end{matrix}
\right)^k \\
\times\left(
\begin{matrix}
\frac{1}{2}+\frac{x}{2 \sqrt{D}}&  0\\
        \phantom{a}&\phantom{b}\\
0    &\frac{1}{2}-\frac{x}{2 \sqrt{D}}
\end{matrix}
\right)
\\
\times \left(
\begin{matrix}
ru-v  x-v\sqrt{D}&  0\\
        \phantom{a}&\phantom{b}\\
0    & ru-v  x+v\sqrt{D}
\end{matrix}
\right)^k \left(
\begin{matrix}
\frac{x-\sqrt{D}}{r^{2k}}& 0 \\
0    &\frac{x+\sqrt{D}}{r^{2k}}
\end{matrix}
\right) \\
\times \left(
\begin{matrix}
-\sqrt{D}& 1 \\
\sqrt{D}    & 1
\end{matrix}
\right),
\end{multline*}
} and once again the result follows by Lemmas \ref{l1} and
\ref{l2} and Proposition \ref{narth}.
\end{proof}

It is also possible to create long continued fractions with no
central partial quotient, but with two extra central partial
quotients that do not come from $\overset{\rightarrow}{N}_{0}$ and
$\overset{\leftarrow}{N}_{0}$.

\begin{proposition}\label{p3}
Let $u$, $v$, $w$, $q$ and $r$ be positive integers.  Let
\[ C= \left(
\begin{matrix}
r & 0 \\
0     & 1
\end{matrix}
\right).
\]
Suppose further, for each integer $k\geq 1$ and each integer $n
\in \{0,1,\dots k-1\}$, that the matrix $N_n$ defined by
{\allowdisplaybreaks \begin{align*} N_n&:= C^{-n}\left(
\begin{matrix}
u & r^{k-1}v \\
v\,\left( r^k + 4\,q\,w \right)    & r\,u - 2\,q\,r^k\,v -
    2\,\left( 1 + 4\,q^2 \right) \,v\,w
\end{matrix}
\right)C^n \\&=\left(
\begin{matrix}
u & r^{k-1-n}v \\
r^{n}v\,\left( r^k + 4\,q\,w \right)    & r\,u - 2\,q\,r^k\,v -
    2\,\left( 1 + 4\,q^2 \right) \,v\,w
\end{matrix}
\right)
\end{align*}
} has an expansion of the form
\begin{equation}\label{Neq3}
N_n=\left(
\begin{matrix}
a_{1}^{(n)} & 1 \\
1     & 0
\end{matrix}
\right)\left(
\begin{matrix}
a_{2}^{(n)} & 1 \\
1     & 0
\end{matrix}
\right) \dots \left(
\begin{matrix}
a_{j_{n}}^{(n)} & 1 \\
1     & 0
\end{matrix}
\right),
\end{equation}
where each $a_{i}^{(n)}$ is a positive integer. Then, for each
integer $k \geq 1$,
\begin{multline}\label{sqrteq3}
\sqrt{ r^k\,\left( r^k + 4\,q\,w \right)  +
  {\left( w + q\,\left( r^k + 4\,q\,w \right)  \right)
      }^2}=\\
  \bigg [w + q(r^k+4qw );\phantom{sdasdasdasdasdasasdasdasdasddaddasdadasdasdasdsad}\\
   \overline{ \vec{N}_{k-1}\vec{N}_{k-2}\dots
\vec{N}_{1}\vec{N}_{0},2q,2q,
\overset{\leftarrow}{N}_{0},\overset{\leftarrow}{N}_{1},\dots,
\overset{\leftarrow}{N}_{k-2},\overset{\leftarrow}{N}_{k-1},
2(w+q(r^k+4qw)) } \bigg].
\end{multline}
For ease of notation, set $D= r^k\,\left( r^k + 4\,q\,w \right)  +
  {\left( w + q\,\left( r^k + 4\,q\,w \right)  \right)
      }^2$ and $\gamma= q(r^k+4 q w)$. Then the fundamental unit
      in $\mathbb{Q}(\sqrt{D})$ is
      \[
\frac{{\left( u + v\,\left( {\sqrt{d}} - w -
           \gamma  \right)/r  \right) }^{2\,k}\,
    \left( {\sqrt{d}} - w + \gamma  \right) \,
    {\left( {\sqrt{d}} + w + \gamma  \right) }^2}{
      {\gamma }^2}.
      \]
\end{proposition}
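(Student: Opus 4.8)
The plan is to follow verbatim the template of the proofs of Propositions \ref{p1} and \ref{p2}. First I would form the symmetric period matrix dictated by Lemma \ref{l1}: the head term is $w+\gamma$, the block $\vec{N}_{k-1}\cdots\vec{N}_{0}$ contributes $N_{k-1}\cdots N_{0}$, the two central partial quotients $2q,2q$ contribute $\left(\begin{matrix} 2q & 1\\ 1 & 0\end{matrix}\right)^2=\left(\begin{matrix} 4q^2+1 & 2q\\ 2q & 1\end{matrix}\right)$, and the reversed block contributes $N_{0}^{T}\cdots N_{k-1}^{T}$. Using the conjugation relation $N_n=C^{-n}N_0C^n$ exactly as before, $N_{k-1}\cdots N_0$ telescopes to $C^{-k}(CN_0)^k$, so the whole expression collapses to
\[
\left(\begin{matrix} w+\gamma & 1\\ 1 & 0\end{matrix}\right) C^{-k}(CN_0)^k \left(\begin{matrix} 4q^2+1 & 2q\\ 2q & 1\end{matrix}\right) \big((CN_0)^k\big)^{T} C^{-k}\left(\begin{matrix} w+\gamma & 1\\ 1 & 0\end{matrix}\right).
\]

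Next I would diagonalize $CN_0$. Its trace is $2ru-2qr^k v-2(1+4q^2)vw$, and using $\det C\,\det N_0$ one finds the eigenvalues $\lambda_\pm=ru-v(w+\gamma)\pm v\sqrt{D}$, which I would confirm by checking $(ru-v(w+\gamma))^2-v^2D=r\det N_0$ via the identity $D-(w+\gamma)^2=r^k(r^k+4qw)$. I would then take $M=\left(\begin{matrix} (w+\gamma-\sqrt{D})/(r^k+4qw) & (w+\gamma+\sqrt{D})/(r^k+4qw)\\ 1 & 1\end{matrix}\right)$, whose columns are the eigenvectors, and verify the three factorizations used in Propositions \ref{p1} and \ref{p2}: that $\left(\begin{matrix} w+\gamma & 1\\ 1 & 0\end{matrix}\right)C^{-k}$ equals $\left(\begin{matrix} -\sqrt{D} & \sqrt{D}\\ 1 & 1\end{matrix}\right)$ times a diagonal matrix times $M^{-1}$; that $(CN_0)^k=M\,\mathrm{diag}(\lambda_-^k,\lambda_+^k)M^{-1}$; and — the one genuinely new computation — that $M^{-1}\left(\begin{matrix} 4q^2+1 & 2q\\ 2q & 1\end{matrix}\right)(M^{-1})^{T}$ is diagonal. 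Writing $R=r^k+4qw$, this last point reduces to the vanishing of the off-diagonal entry $2q(a+b)-ab-(4q^2+1)$, where $a+b=2(w+\gamma)/R$ and $ab=-r^k/R$; substituting $\gamma=qR$ makes it vanish identically.

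Chaining the three factorizations, the inner $M^{-1}M$ and $M^{T}(M^{-1})^{T}$ pairs cancel and the product assumes the form $\left(\begin{matrix} -\sqrt{D} & \sqrt{D}\\ 1 & 1\end{matrix}\right)\mathrm{diag}(\alpha,\beta)\left(\begin{matrix} -\sqrt{D} & 1\\ \sqrt{D} & 1\end{matrix}\right)$ required by Lemma \ref{l2}, with $\beta$ the product of the lower-right entries of the five diagonal factors. Lemma \ref{l2} gives $P_{1,1}/P_{2,2}=D$, Lemma \ref{l1} then yields the asserted continued fraction expansion \eqref{sqrteq3}, and Proposition \ref{pfu} identifies the fundamental unit as $2\sqrt{D}\,\beta$.

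The main obstacle is the final simplification. I would track $\beta=d_2^{\,2}\,\lambda_+^{2k}\,e_2$ with $d_2=(w+\gamma+\sqrt{D})/(r^kR)$, $\lambda_+=r\big(u+v(\sqrt{D}-w-\gamma)/r\big)$, and $e_2=\big((w-\gamma-\sqrt{D})^2+R^2\big)/(4D)$, and then collapse $2\sqrt{D}\,\beta$ to the stated closed form. The bookkeeping here is the delicate step: one must repeatedly invoke $D-(w+\gamma)^2=r^kR$ and $\gamma=qR$ to rationalize the $\sqrt{D}$ entering the denominator through $e_2$, to absorb the factor $r^{2k}$ produced by $\lambda_+^{2k}$ against the $r^{2k}$ in $d_2^{\,2}$, and to reconcile the residual power of $R$ against $\gamma^2$ in the denominator. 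Because this reduction is where sign and factor errors are most likely, I would carry it out symbolically and then re-verify the resulting unit on a small numerical instance (for example $r=1,\ k=1,\ w=1$, testing both $q=1$ and $q=2$) before committing to the closed form.
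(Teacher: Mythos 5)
Your proposal is correct and follows essentially the same route as the paper: the same telescoping of $N_{k-1}\cdots N_0$ into $C^{-k}(CN_0)^k$, the same eigenvalues $ru-v(w+\gamma)\pm v\sqrt{D}$, the same eigenvector matrix $M$ (the paper writes its first-row entries as $q(w+\gamma\mp\sqrt{D})/\gamma$, which equals your $(w+\gamma\mp\sqrt{D})/(r^k+4qw)$ since $\gamma=qR$), and the same appeal to Lemmas \ref{l1} and \ref{l2} and Proposition \ref{pfu}. The only cosmetic difference is that you merge the two central $2q$ factors into a single symmetric matrix and spell out why the off-diagonal entry of $M^{-1}(\cdot)(M^{-1})^{T}$ vanishes, a verification the paper delegates to a computer algebra system.
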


\begin{proof}
We proceed as in the proof of Propositions \ref{p1} and \ref{p2}.
Let \[D =  r^k\,\left( r^k + 4\,q\,w \right)  +
  {\left( w + q\,\left( r^k + 4\,q\,w \right)  \right)
      }^2.
   \] As above, the definition of $N_n$ implies
{\allowdisplaybreaks
\begin{multline*} \left(
\begin{matrix}
 w + q\,\left( r^k + 4\,q\,w \right) & 1 \\
1     & 0
\end{matrix}
\right)N_{k-1} \dots N_1 N_0 \left(
\begin{matrix}
2q  & 1 \\
1     & 0
\end{matrix}
\right)\\
\times\left(
\begin{matrix}
 2q & 1 \\
1     & 0
\end{matrix}
\right)N_0^T N_1^T \dots N_{k-1}^T\left(
\begin{matrix}
w + q\,\left( r^k + 4\,q\,w \right) & 1 \\
1     & 0
\end{matrix}
\right)\\
=
 \left(
\begin{matrix}
w + q\,\left( r^k + 4\,q\,w \right) & 1 \\
1     & 0
\end{matrix}
\right)C^{-k}(CN_{0})^k\left(
\begin{matrix}
 2q & 1 \\
1     & 0
\end{matrix}
\right)\phantom{asdasdasdasdasdasd}\\
\times\left(
\begin{matrix}
 2q & 1 \\
1     & 0
\end{matrix}
\right)((CN_{0})^k)^{T}C^{-k}\left(
\begin{matrix}
 w + q\,\left( r^k + 4\,q\,w \right) & 1 \\
1     & 0
\end{matrix}
\right). \end{multline*} }
 Recall that $\gamma =
q(r^k+4 q w)$ and define the matrix $M$ by
\[
M:=\left(
\begin{matrix}
\displaystyle{ \frac{q \left(v (w+\gamma )-\sqrt{D} v\right)}{v
   \gamma }} &  \displaystyle{\frac{q \left((w+\gamma ) v+\sqrt{D}
   v\right)}{v \gamma }}\\
        \phantom{a}&\phantom{b}\\
1     & 1
\end{matrix}
\right).
\]
One can check (preferably with a computer algebra system like
\emph{Mathematica}) that {\allowdisplaybreaks
\begin{align*}&
\left(
\begin{matrix}
 w + q\,\left( r^k + 4\,q\,w \right)& 1 \\
1     & 0
\end{matrix}
\right)C^{-k}\\
&\phantom{as} = \left(
\begin{matrix}
-\sqrt{D}& \sqrt{D} \\
1     & 1
\end{matrix}
\right)\left(
\begin{matrix}
\displaystyle{\frac{q r^{-k} \left(w+\gamma
   -\sqrt{D}\right)}{\gamma }} & 0 \\
0    &\displaystyle{\frac{q r^{-k} \left(w+\gamma
   +\sqrt{D}\right)}{\gamma }}
\end{matrix}
\right) M^{-1};\\
&\phantom{as}\\
 &(CN_{0})^k = M \left(
\begin{matrix}
r u-v \left(w+\gamma +\sqrt{D}\right)&  0\\
        \phantom{a}&\phantom{b}\\
0    &r u+v \left(-w-\gamma +\sqrt{D}\right)
\end{matrix}
\right)^k M^{-1}; \\
&\phantom{as}\\
&M^{-1}\left(
\begin{matrix}
 2q & 1 \\
1     & 0
\end{matrix}
\right)\left(
\begin{matrix}
 2q & 1 \\
1     & 0
\end{matrix}
\right)(M^{-1})^{T}=\left(
\begin{matrix}
\displaystyle{ \frac{1}{2}+\frac{w-\gamma}{2 \sqrt{D}}}&  0\\
       \phantom{a}&\phantom{b}\\
0    &\displaystyle{\frac{1}{2}-\frac{w-\gamma}{2 \sqrt{D}}}
\end{matrix}
\right).
\end{align*}
} Thus, as in Propositions \ref{p1} and \ref{p2}, the result
follows by Lemmas \ref{l1} and \ref{l2} and Proposition
\ref{narth}.
\end{proof}

\section{long continued fractions}
We next give specific  values (in terms of other variables) for
some of the variables in the propositions so as to produce
explicit long continued fractions. The main problem is to do this
in such a way that the matrices $N_n$ satisfy \eqref{Neq},
\eqref{Neq2} or \eqref{Neq3}.

From Proposition \ref{p1}, we have that
\[
N_0 =\left(
\begin{matrix}
u & r^{k-1}v \\
 w     & r u -2 v w x
\end{matrix}
\right),
\]
so one obvious approach is to initially define $u$, $v$, $w$, $x$
and $r$ from a product of the form
\[
N_{0}= \left(
\begin{matrix}
u & r^{k-1}v \\
 w     & r u -2 v w x
\end{matrix}
\right)= \left(
\begin{matrix}
a_{1}^{(0)} & 1 \\
1     & 0
\end{matrix}
\right)\left(
\begin{matrix}
a_{2}^{(0)} & 1 \\
1     & 0
\end{matrix}
\right) \dots \left(
\begin{matrix}
a_{j_{0}}^{(0)} & 1 \\
1     & 0
\end{matrix}
\right)
\]
and then specialize the $a_{i}^{(0)}$ so that each $N_{n}$ also
has an expansion of a similar form. We proceed similarly with
Propositions \ref{p2} and \ref{p3}.

We consider one example in detail to illustrate the method. We
consider Proposition \ref{p1} with
\[
N_{0}= \left(
\begin{matrix}
u & r^{k-1}v \\
 w     & r u -2 v w x
\end{matrix}
\right)= \left(
\begin{matrix}
a & 1 \\
1     & 0
\end{matrix}
\right)\left(
\begin{matrix}
b & 1 \\
1     & 0
\end{matrix}
\right)=\left(
\begin{matrix}
1+a b & a \\
b     & 1
\end{matrix}
\right).
\]
Upon comparing $(1,2)$ entries, it can be seen that $a$ must be a
multiple of $r^{k-1}$, so replace $a$ by $r^{k-1}a$ and we then
have
\[
\left(
\begin{matrix}
u & r^{k-1}v \\
 w     & r u -2 v w x
\end{matrix}
\right)= \left(
\begin{matrix}
1+r^{k-1}a b & r^{k-1}a \\
b     & 1
\end{matrix}
\right).
\]
Then $v=a$, $w=b$, $u=1+ a b r^{k-1}$ and $x=(a b r^{k}+r-1)/(2
ab)$. The requirements that $w x$, $w r/v$ and $2 v x$ be integers
force $b$ and $r$ to have the forms $b = 2 m a$, $r=1+2 a m s$,
respectively,  for some integers $m$ and $s$. Thus  we finally have
{\allowdisplaybreaks
\begin{align*}
 u&=1 + 2\,a^2\,m\,{\left( 1 + 2\,a\,m\,s \right) }^{-1 + k},\\
  v&=a,\\
w&=  2\,a\,m, \\
x&=\frac{s + a\,{\left( 1 + 2\,a\,m\,s \right) }^k}
   {2\,a}.
   \end{align*}
}

With these values, we have that {\allowdisplaybreaks
\begin{align*}
\frac{w r^k}{v} + w^2 x^2&=m\,\left( 2\,{\left( 1 + 2\,a\,m\,s
\right) }^k +
    m\,{\left( s + a\,{\left( 1 + 2\,a\,m\,s \right) }^k
         \right) }^2 \right),\\
N_n&=\left(
\begin{matrix}
u & r^{k-1-n}v \\
 w r^n    & r u -2 v w x
\end{matrix}
\right)\\&= \left(
\begin{matrix}
1 + 2\,a^2\,m\,{\left( 1 + 2\,a\,m\,s \right) }^{-1 + k}
& a\,{\left( 1 + 2\,a\,m\,s \right) }^{-1 + k - n} \\
 2\,a\,m\,{\left( 1 + 2\,a\,m\,s \right) }^n   &1
\end{matrix}
\right)\\
&=\left(
\begin{matrix}
a\,{\left( 1 + 2\,a\,m\,s \right) }^{-1 + k - n} & 1 \\
 1    & 0
\end{matrix}
\right) \left(
\begin{matrix}
2\,a\,m\,{\left( 1 + 2\,a\,m\,s \right) }^n & 1 \\
 1    & 0
\end{matrix}
\right).
\end{align*}
} Thus
\[
\vec{N}_n = a\,{\left( 1 + 2\,a\,m\,s \right) }^{-1 + k - n},
  2\,a\,m\,{\left( 1 + 2\,a\,m\,s \right) }^n.\]
For a non-square integer $D$, let $l(D)$ denote the length of the
fundamental period in the regular continued fraction expansion of
$\sqrt{D}$. We have proved the following theorem.
\begin{theorem}\label{t1}
Let $a$, $m$, $s$ and $k$ be positive integers. Set
\[
D:=m\,\left( 2\,{\left( 1 + 2\,a\,m\,s \right) }^k +
    m\,{\left( s + a\,{\left( 1 + 2\,a\,m\,s \right) }^k
         \right) }^2 \right).
\]
Then $l(D) =4k+2$ and {\allowdisplaybreaks
\begin{align*}
\sqrt{D}=\bigg[m\,( s + a\,{\left( 1 + 2\,a\,m\,s \right) }^k
);&\overline{\, a,\,
  2\,a\,m\,{\left( 1 + 2\,a\,m\,s \right) }^{k-1},}\\
  &\overline{a(2ams+1),\,
  2am{(2ams+1)}^{k-2},}\\&
\overline{ a(2ams+1)^2,\,
  2am{(2ams+1)}^{k-3},}
\\
&\phantom{sadasdsadfsa}\dots\\
& \overline{a{(1 + 2ams)}^{k-1},\,2am,}\\
&\overline{s+a{(2ams+1)}^k,}\\
&\overline{2am,\,a{(1 +
2ams)}^{k-1},}\\
&\phantom{sadasdsadfsa}\dots,\\
&\overline{2am{(2ams+1)}^{k-3},\,a(2ams+1)^2,}\\
&\overline{2am{(2ams+1)}^{k-2},\,a(2ams+1),}\\
&\overline{2\,a\,m\,{\left( 1 + 2\,a\,m\,s \right) }^{k-1},\,a,}\\
&\overline{2m\,( s + a\,{\left( 1 + 2\,a\,m\,s \right) }^k)}\, \bigg
].
\end{align*}
}
\end{theorem}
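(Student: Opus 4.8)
The plan is to recognize Theorem~\ref{t1} as the specialization of Proposition~\ref{p1} to the parameter values
\[
v=a,\quad w=2am,\quad r=1+2ams,\quad u=1+2a^2 m(1+2ams)^{k-1},\quad x=\frac{s+a(1+2ams)^k}{2a}
\]
extracted immediately before the statement, so that the bulk of the work is verification rather than discovery. First I would confirm that these values meet every hypothesis of Proposition~\ref{p1}: since $a,m,s\ge 1$ the quantities $v,w,r,u$ are positive integers with $r\ge 3$, and moreover $rw/v=2m(1+2ams)$, $wx=m(s+a(1+2ams)^k)$ and $2vx=s+a(1+2ams)^k$ are all integers, exactly as required. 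This legitimizes the application of the proposition.

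The second step consists of the two algebraic identities on which everything rests. I would check by direct substitution that $wr^k/v+w^2x^2$ equals the stated $D$, and that for each $n$ the matrix $N_n$ factors as
\[
N_n=\left(\begin{matrix} a(1+2ams)^{k-1-n} & 1\\ 1 & 0\end{matrix}\right)\left(\begin{matrix} 2am(1+2ams)^{n} & 1\\ 1 & 0\end{matrix}\right),
\]
so that $\vec{N}_n=a(1+2ams)^{k-1-n},\,2am(1+2ams)^{n}$. The essential point here is positivity: for $0\le n\le k-1$ both exponents $k-1-n$ and $n$ are nonnegative, and since $1+2ams\ge 3$, $a\ge 1$ and $2am\ge 2$, every partial quotient is a positive integer. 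This verifies hypothesis~\eqref{Neq}, and Proposition~\ref{p1} then delivers the expansion~\eqref{sqrteq}. Substituting $a_0=wx=m(s+a(1+2ams)^k)$, the central term $2vx=s+a(1+2ams)^k$, the terminal term $2wx=2m(s+a(1+2ams)^k)$, and the computed $\vec{N}_n$ reproduces the displayed continued fraction term by term.

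It remains to justify that the period length is exactly $4k+2$. Counting partial quotients in the overlined block, $\vec{N}_{k-1}\cdots\vec{N}_0$ contributes $2k$ terms, the central $2vx$ one term, the reversed block $\overset{\leftarrow}{N}_0\cdots\overset{\leftarrow}{N}_{k-1}$ another $2k$ terms, and the final $2wx$ one term, for a total of $4k+2$. That $\sqrt{D}$ is irrational (so that $l(D)$ is defined) is automatic, because a continued fraction with positive integer partial quotients represents an irrational number, forcing $D$ to be a non-square. The one point genuinely requiring argument is that $4k+2$ is the \emph{fundamental} period, not a proper multiple of a shorter one. For this I would invoke the standard fact that in the expansion of $\sqrt{D}$ the partial quotient $2\lfloor\sqrt{D}\rfloor=2a_0$ occurs exactly once per fundamental period (it marks the unique recurrence of the complete quotient $a_0+\sqrt{D}$). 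It therefore suffices to show that $2a_0=2m(s+a(1+2ams)^k)$ strictly dominates every other partial quotient in the block: the central term obeys $s+a(1+2ams)^k<2m(s+a(1+2ams)^k)$ because $m\ge 1$, while each $a(1+2ams)^{k-1-n}$ and $2am(1+2ams)^{n}$ is at most $2am(1+2ams)^{k-1}<2am(1+2ams)^{k}\le 2a_0$. This comparison is where I expect the only real care to be needed; granting it, $2a_0$ appears just once, the displayed period is fundamental, and $l(D)=4k+2$, completing the proof.
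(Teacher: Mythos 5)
Your proposal is correct and follows essentially the same route as the paper: the paper proves Theorem~\ref{t1} precisely by specializing Proposition~\ref{p1} to $v=a$, $w=2am$, $r=1+2ams$, $u=1+2a^2m(1+2ams)^{k-1}$, $x=(s+a(1+2ams)^k)/(2a)$ and exhibiting the same two-matrix factorization of $N_n$. Your added verification that the displayed period of length $4k+2$ is in fact the \emph{fundamental} period (via the uniqueness of the partial quotient $2a_0$ per period and the domination estimate) is a point the paper leaves implicit, and it is a welcome tightening rather than a divergence.
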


This long continued fraction generalizes Madden's first example in
Section 3 of \cite{M01}, where Madden's continued fraction is the
case $m=1$ of the continued fraction above (This continued
fraction of Madden is also given in row 1 of Table 3 in Williams
paper \cite{W85}). The case $m=1$, $a=1$ gives Bernstein's Theorem
3 from \cite{B76}.

\begin{theorem}\label{t1a}
Let $a>1$, $m$, $s$ and $k$ be positive integers. Set
\[
D:=m\,\left( 2\,{\left(2 a\,m\,s-1 \right) }^k +
    m\,{\left( s - a\,{\left(2 a\,m\,s-1 \right) }^k
         \right) }^2 \right).
\]
Then $l(D) =6k+2$ and {\allowdisplaybreaks\begin{align*}
\sqrt{D}=\bigg[m\,( -s &+ a\,{\left(2 a\,m\,s-1 \right) }^k
);\overline{\, a-1,\,1,\,
  2\,a\,m\,{\left(2 a\,m\,s-1 \right) }^{k-1}-1,}\\
  &\overline{a(2ams-1)-1,\,1,\,
  2am{(2ams-1)}^{k-2}-1,}\\&
\overline{ a(2ams-1)^2-1,\,1,\,
  2am{(2ams-1)}^{k-3}-1,}
\\
&\phantom{sadasdsadfsa}\dots\\
& \overline{a{(-1 + 2ams)}^{k-1}-1,\,1,\,2am-1,}\\
&\overline{-s+a{(2ams-1)}^k,}\\
&\overline{2am-1,\,1,\,a{(1 +
2ams)}^{k-1}-1,}\\
&\phantom{sadasdsadfsa}\dots,\\
&\overline{2am{(2ams-1)}^{k-3}-1,\,1,\,a(2ams-1)^2-1,}\\
&\overline{2am{(2ams-1)}^{k-2}-1,\,1,\,a(1+2ams)-1,}\\
&\overline{2\,a\,m\,{\left(2 a\,m\,s-1 \right) }^{k-1}-1,\,1,\,a-1,}\\
&\overline{2m\,( -s + a\,{\left(2 a\,m\,s-1 \right) }^k)}\bigg ].
\end{align*}
}
\end{theorem}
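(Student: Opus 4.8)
The plan is to apply Proposition~\ref{p1} once more, but now with a \emph{three}--factor decomposition of each $N_n$, exactly parallel to the two--factor decomposition that produced Theorem~\ref{t1}. The point is that inserting a central elementary matrix $\left(\begin{smallmatrix}1&1\\1&0\end{smallmatrix}\right)$ lengthens every block $\vec N_n$ from two partial quotients to three, and at the same time forces the base to become $2ams-1$ rather than $1+2ams$. This accounts both for the period length $6k+2$ (three quotients per block instead of two) and for the sign change in the radicand.

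First I would set
\[
r=2ams-1,\qquad v=a,\qquad w=2am,\qquad u=2a^2m(2ams-1)^{k-1}-1,\qquad x=\frac{a(2ams-1)^k-s}{2a},
\]
and verify the hypotheses of Proposition~\ref{p1}. Using $a>1$ one has $2ams-1\ge 3$, so $r,u,v,w$ are positive integers; and $rw/v=2m(2ams-1)$, $2vx=a(2ams-1)^k-s$, $wx=m(a(2ams-1)^k-s)$ are all integers, which are precisely the divisibility requirements of the proposition (note $x$ itself need not be an integer).

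Next I would exhibit, for $0\le n\le k-1$, the factorization
\[
N_n=\begin{pmatrix} a(2ams-1)^{k-1-n}-1 & 1\\ 1 & 0\end{pmatrix}\begin{pmatrix}1&1\\1&0\end{pmatrix}\begin{pmatrix} 2am(2ams-1)^{n}-1 & 1\\ 1&0\end{pmatrix},
\]
and check by direct multiplication that it equals the template $\left(\begin{smallmatrix} u & r^{k-1-n}v\\ r^n w & ru-2vwx\end{smallmatrix}\right)$ demanded by Proposition~\ref{p1}. Writing $P_n=a(2ams-1)^{k-1-n}-1$ and $Q_n=2am(2ams-1)^{n}-1$, the product is $\left(\begin{smallmatrix}(P_n+1)(Q_n+1)-1 & P_n+1\\ Q_n+1 & 1\end{smallmatrix}\right)$: the off--diagonal entries are immediately $P_n+1=r^{k-1-n}v$ and $Q_n+1=r^n w$, the $(2,2)$ entry is the constant $1$, which equals $ru-2vwx$ by the choice of $x$, and the crucial point is that $(P_n+1)(Q_n+1)=a(2ams-1)^{k-1-n}\cdot 2am(2ams-1)^{n}=2a^2m(2ams-1)^{k-1}$ is \emph{independent of $n$}, so the $(1,1)$ entry is the constant $u$. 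This is the one genuinely load--bearing computation: the exponents $k-1-n$ and $n$ are chosen so that their sum $k-1$ is fixed, which is exactly what makes every $N_n$ fit the rigid form of Proposition~\ref{p1}.

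Finally I would confirm positivity of every partial quotient. The binding constraint is the leading entry at $n=k-1$, namely $a-1\ge 1$, which is precisely the hypothesis $a>1$; the third entry at $n=0$ gives $2am-1\ge 1$, always true. Thus each block is the admissible string $a(2ams-1)^{k-1-n}-1,\,1,\,2am(2ams-1)^{n}-1$, and Proposition~\ref{p1} delivers the expansion of $\sqrt D$ with $D=wr^k/v+w^2x^2=m\bigl(2(2ams-1)^k+m(s-a(2ams-1)^k)^2\bigr)$, the stated radicand; the leading, central and trailing quotients $wx$, $2vx$, $2wx$ reduce to $m(-s+a(2ams-1)^k)$, $-s+a(2ams-1)^k$, $2m(-s+a(2ams-1)^k)$, as listed. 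Each of the $2k$ blocks now contributes three partial quotients, and with the two extra terms $2vx$ and $2wx$ the period has length $6k+2$. The only point deserving a word of care is that this exhibited period is \emph{fundamental} and not a repetition of a shorter palindromic block; this follows from the uniqueness of the regular continued fraction of a quadratic surd together with the canonical shape $[\,a_0;\overline{\cdots,2a_0}\,]$ guaranteed by Lemma~\ref{l1}, the placement of the isolated $1$'s precluding any smaller period.
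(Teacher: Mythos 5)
Your proposal is correct and follows essentially the same route as the paper: the identical substitution $r=2ams-1$, $v=a$, $w=2am$, $u=2a^2m(2ams-1)^{k-1}-1$, $x=(a(2ams-1)^k-s)/(2a)$ into Proposition~\ref{p1}, with the same three-factor decomposition of each $N_n$ (your version merely spells out the verification that the paper leaves implicit, and correctly writes $2ams-1$ where the paper's displayed factorization has a typographical $1+2ams$).
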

\begin{proof}
In Proposition \ref{p1}, let {\allowdisplaybreaks
\begin{align*}
u&=2 a^2 m (2 a m s-1)^{k-1}-1,\\
v&=a, \\
w&=2 a m,\\
r&=2 a m s-1,\\ x&=\frac{a (2 a m
   s-1)^k-s}{2 a}.
\end{align*}
} With these values, {\allowdisplaybreaks
\begin{align*}
\frac{w r^k}{v}& + w^2 x^2=m\,\left( 2\,{\left(2 a\,m\,s-1 \right)
}^k +
    m\,{\left( s - a\,{\left(2 a\,m\,s-1 \right) }^k
         \right) }^2 \right),\\
N_n&=\left(
\begin{matrix}
u & r^{k-1-n}v \\
 w r^n    & r u -2 v w x
\end{matrix}
\right)\\&= \left(
\begin{matrix}
-1 + 2\,a^2\,m\,{\left(2 a\,m\,s-1 \right) }^{-1 + k}
& a\,{\left(2 a\,m\,s-1 \right) }^{-1 + k - n} \\
 2\,a\,m\,{\left(2 a\,m\,s-1 \right) }^n   &1
\end{matrix}
\right)\\
&=\left(
\begin{matrix}
a\,{\left( 1 + 2ams \right) }^{ k - n-1}-1 & 1 \\
 1    & 0
\end{matrix}
\right)\left(
\begin{matrix}
1 & 1 \\
1    & 0
\end{matrix}
\right) \left(
\begin{matrix}
2\,a\,m\,{\left( 1 + 2ams \right) }^n-1 & 1 \\
 1    & 0
\end{matrix}
\right).
\end{align*}
}
\end{proof}

This example generalizes Madden's second example (page 130
\cite{M01}) (set $m=1$) (Madden's second example can also be found
in row 2 of Table 3 in \cite{W85}), Theorem 8.1 of Levesque and
Rhin \cite{LR86} (set $a=2$, $m=a$ and $s=1$) and Theorem 10.1 of
Levesque and Rhin \cite{LR86} (set $a=2$, $m=a/2$ and $s=2$).

The example given by van der Poorten in \cite{VDP94}, namely
\[
D=\left (q a^k- \frac{a+1}{4 q} \right )^2+a^k \vspace{10pt} \text
{ with }\vspace{10pt} 4 q |a+1,
\]
follows upon setting $a=2q$, $m=1/2$ and $s=(a+1)/(2 q)$.

 We can also let $a$, $m$ and $s$ take negative values in
 Theorem \ref{t1}. Any zero- and negative partial quotients in the
continued fraction expansion can be removed by the following
transformations:
\begin{align}\label{rm0-}
[m,n,0,p,
\alpha]&=[m,n+p, \alpha],\\
[m,-n,\alpha]&=[m-1,1,n-1,-\alpha]. \notag
\end{align}
Out of the eight possible sign combinations for $a$, $m$ and $s$,
only four lead to distinct polynomials, those in Theorems \ref{t1}
and \ref{t1a} (Theorem \ref{t1a} could also have been proved by
replacing $s$ by $-s$ for the case $k$ is even, and replacing $m$
by $-m$ in the case $k$ is odd) and two others.

Our first application of this transformation is to Theorem
\ref{t1a}.
\begin{corollary}
Let $m$, $s$ and $k$ be positive integers such that $ms>1$. Set
\[
D:=m\,\left( 2\,{\left(2 \,m\,s-1 \right) }^k +
    m\,{\left( s - \,{\left(2 \,m\,s-1 \right) }^k
         \right) }^2 \right).
\]
Then $l(D) =6k-2$ and {\allowdisplaybreaks\begin{align*}
\sqrt{D}=\bigg[m\,( -s &+ \,{\left(2 \,m\,s-1 \right) }^k
)+1;\overline{\,
  2\,\,m\,{\left(2 \,m\,s-1 \right) }^{k-1}-1,}\\
  &\overline{(2ms-1)-1,\,1,\,
  2m{(2ms-1)}^{k-2}-1,}\\&
\overline{ (2ms-1)^2-1,\,1,\,
  2m{(2ms-1)}^{k-3}-1,}
\\
&\phantom{sadasdsadfsa}\dots\\
& \overline{{(-1 + 2ms)}^{k-1}-1,\,1,\,2m-1,}\\
&\overline{-s+{(2ms-1)}^k,}\\
&\overline{2m-1,\,1,\,{(1 +
2ms)}^{k-1}-1,}\\
&\phantom{sadasdsadfsa}\dots,\\
&\overline{2m{(2ms-1)}^{k-3}-1,\,1,\,(2ms-1)^2-1,}\\
&\overline{2m{(2ms-1)}^{k-2}-1,\,1,\,(1+2ms)-1,}\\
&\overline{2\,\,m\,{\left(2 \,m\,s-1 \right) }^{k-1}-1,}
\overline{2m\,( -s + \,{\left(2 \,m\,s-1 \right) }^k)+2}\,\bigg ].
\end{align*}
}
\end{corollary}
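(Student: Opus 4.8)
The plan is to obtain the corollary by specializing $a=1$ in Theorem~\ref{t1a} and then deleting the zero partial quotients this creates, using the first relation in \eqref{rm0-}. Strictly, the hypothesis of Theorem~\ref{t1a} is $a>1$, so I cannot quote its conclusion verbatim; but the matrix factorization of $N_n$ in its proof is an algebraic identity that holds at $a=1$ as well, and the fixed-point computation in the proof of Lemma~\ref{l1}, together with Lemma~\ref{l2}, depends only on that identity and not on positivity. Hence the (now possibly non-regular) palindromic sequence obtained by putting $a=1$ still satisfies $\alpha=(w+\alpha u)/(u+\alpha v)$ and therefore represents $\sqrt{D}$; only its admissibility as a \emph{regular} continued fraction fails, and exactly at the entries $a-1=0$. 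The hypothesis $ms>1$ forces $2ms-1\ge 3$, so every other entry, such as $2m(2ms-1)^{j}-1$, $(2ms-1)^{j}-1$ and $2m-1$, is a positive integer and no negative quotients occur, so only the zero-removal relation in \eqref{rm0-} is needed.

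With $a=1$ the entries $a-1=0$ occur in exactly two positions of the period: as its first entry and as the entry just before the terminal entry $2m(-s+(2ms-1)^k)$; everything between (the central term $-s+(2ms-1)^k$ and all ascending and descending blocks) is already exactly as stated. First I would absorb the leading zero: writing $q_0:=m(-s+(2ms-1)^k)$, the expansion opens $q_0,0,1,\dots$, and $\dots,x,0,y,\dots=\dots,x+y,\dots$ collapses $q_0,0,1$ into the new integer part $q_0+1$, leaving $2m(2ms-1)^{k-1}-1$ as the first periodic entry. The delicate point is the seam between consecutive copies of the period. Since the expansion is purely periodic after $q_0$, the zero ending one period and the zero beginning the next are separated only by the terminal entry $B:=2q_0$, so the join reads $\dots,1,0,B,0,1,\dots$, where the two $1$'s come from the end of one period and the start of the next. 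Removing these two zeros in turn, $1,0,B\mapsto 1+B$ and then $1+B,0,1\mapsto B+2$, fuses the two $1$'s and $B$ into the single entry $B+2=2(q_0+1)$, which is exactly the terminal entry in the corollary and is consistent with the normal form $[\,a_0;\overline{\dots,2a_0}\,]$ for $a_0=q_0+1$.

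It remains to read off the reduced period. After these reductions the repeating block runs from $2m(2ms-1)^{k-1}-1$ through $B+2$; relative to Theorem~\ref{t1a} it has lost its two zeros together with the two $1$'s that flanked $B$, so its length drops by four, from $6k+2$ to $6k-2$, with the entries precisely those displayed. The interior being untouched, the reduced expansion agrees term for term with the statement; and because the matrix identity makes the unreduced sequence represent $\sqrt{D}$ while \eqref{rm0-} preserves the value of a continued fraction, this regular periodic expansion is the continued fraction of $\sqrt{D}$, whence $l(D)=6k-2$.

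The step needing the most care is the seam: one must see that applying \eqref{rm0-} within a single period does not suffice, and that the trailing zero of each period must be combined with the leading zero of the next around the shared entry $B$. Confirming that this fusion gives $B+2=2(q_0+1)$, that it respects the periodicity, and that exactly four entries are deleted per period is the crux; the remaining checks, namely the legitimacy of the $a=1$ specialization and the positivity of the surviving quotients under $ms>1$, are then routine.
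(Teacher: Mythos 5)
Your proposal is correct and follows exactly the paper's route: the paper's proof is the one-line statement that the corollary ``follows upon letting $a=1$ in Theorem~\ref{t1a} and using \eqref{rm0-} to remove the zeros resulting from the `$a-1$' terms.'' Your additional care over the $a>1$ hypothesis, the fusion at the seam between periods giving the terminal entry $2(q_0+1)$, and the count of four deleted entries per period (hence $l(D)=6k+2-4=6k-2$) simply makes explicit what the paper leaves to the reader.
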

\begin{proof}
This follows upon letting $a=1$ in Theorem \ref{t1a} and using
\eqref{rm0-} to remove the zeros resulting from the ``$a-1$"
terms.
\end{proof}

This continued fraction generalizes that in Theorem 2 of Bernstein
\cite{B76} (set $m=1$ and $s=a+1$) and also that in Theorem 6.1 of
Levesque and Rhin \cite{LR86} (set $m=2a$ and $s=1$).

\begin{theorem}\label{t1b}
Let $a>1$, $m$, $s$ and $k$ be positive integers. Set
\[
D:=m\,\left(
     m\,{\left( s + a\,{\left( 2\,a\,m\,s -1 \right) }^{k}
          \right) }^2- 2\,{\left( 2\,a\,m\,s -1 \right) }^{k} \right).
\]
Then  $l(D) =6k+4$ and {\allowdisplaybreaks
\begin{align}\label{c1eq}
\sqrt{D}=\bigg[m\,( s &+ a\,{\left(2 a\,m\,s-1 \right) }^{k}
)-1;\overline{1,\, a-1,\,
  2\,a\,m\,{\left(2 a\,m\,s-1 \right) }^{k-1}-1,}\\
  &\overline{1,\,a(2ams-1)-1,\,
  2am{(2ams-1)}^{k-2}-1,}  \notag\\&
\overline{1,\, a(2ams-1)^2-1,\,
  2am{(2ams-1)}^{k-3}-1,}\notag
\\
&\phantom{sadasdsadfsa}\dots  \notag \\
& \overline{1,\,a{( 2ams-1)}^{k-1}-1,\,2am-1,}  \notag \\
&\overline{1,\,s+a{(2ams-1)}^{k}-2,\,1,}  \notag \\
&\overline{2am-1,\,a{(2ams-1)}^{k-1}-1,\,1,}  \notag   \\
&\phantom{sadasdsadfsa}\dots,  \notag \\
&\overline{2am{(2ams-1)}^{k-3}-1,\,a(2ams-1)^2-1,\,1,}  \notag \\
&\overline{2am{(2ams-1)}^{k-2}-1,\,a(2ams-1)-1,\,1,}  \notag \\
&\overline{2\,a\,m\,{\left(2 a\,m\,s-1 \right) }^{k-1}-1,\,a-1,\,1,}  \notag \\
&\overline{2m\,( s + a\,{\left(2 a\,m\,s-1 \right)
}^{k})-2}\,\bigg ].  \notag
\end{align}
}
\end{theorem}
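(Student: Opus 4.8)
The plan is to deduce Theorem~\ref{t1b} from Theorem~\ref{t1} by a signed specialization of the parameters, followed by regularization through the reduction rules \eqref{rm0-}. First I would check that replacing $s\mapsto-s$ when $k$ is odd, or $m\mapsto-m$ when $k$ is even, carries the polynomial $D$ of Theorem~\ref{t1} to the polynomial $D$ of Theorem~\ref{t1b}: in either case the base $1+2ams$ becomes $-(2ams-1)$, and the factor $(-1)^k$ flips the sign of the $2(2ams-1)^k$ term relative to the square, which is exactly the difference between the two $D$'s. Since the matrix identity behind Theorem~\ref{t1} (Lemma~\ref{l1} applied to the chosen palindrome of $N_n$'s) is a polynomial identity in $a,m,s$, it persists under these substitutions; hence the substituted string of partial quotients is again a continued-fraction expansion representing $\pm\sqrt D$.

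The key observation is that the substitution sends the leading term $m(s+a(1+2ams)^k)$ to $-m(s+a(2ams-1)^k)$, so the substituted formal expansion equals $-\sqrt D$, not $+\sqrt D$. I would therefore negate every partial quotient, using $-[a_0;a_1,a_2,\dots]=[-a_0;-a_1,-a_2,\dots]$, to obtain a genuine expansion of $+\sqrt D$ whose leading term $m(s+a(2ams-1)^k)$ is positive. In this expansion the negative entries are exactly those in the two-term blocks $\vec N_n$ for $n$ of one fixed parity; whether the central term is among the negatives depends on the parity of $k$ (equivalently, on which of $s\mapsto-s$ or $m\mapsto-m$ was used), and the two cases are handled identically thereafter.

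Next I would clear the negatives by applying \eqref{rm0-} repeatedly from the left. Each use of $[m,-n,\alpha]=[m-1,1,n-1,-\alpha]$ lowers the preceding quotient by one, inserts a partial quotient~$1$, and reverses the sign of the whole tail; since $a>1$ keeps every reduced quotient $a(2ams-1)^{k-1-n}-1$ at least~$1$, no zeros arise and the companion rule $[m,n,0,p,\alpha]=[m,n+p,\alpha]$ is not needed. The pleasant point—already visible in the smallest case $k=1$, $a=2$, $m=s=1$, where $-\sqrt{43}=[-7;\overline{2,4,-7,4,2,-14}]$ regularizes to $[6;\overline{1,1,3,1,5,1,3,1,1,12}]$—is that the sign-reversals telescope against the block-alternating sign pattern, so that once the negatives of one period are cleared the string stabilizes and repeats. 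The bookkeeping then yields precisely the leading term $m(s+a(2ams-1)^k)-1$ (the $-1$ from the first reduction), the central block $1,\,s+a(2ams-1)^k-2,\,1$ (the $-2$ from a reduction on each side), and, in each former block, a leading~$1$ followed by $a(2ams-1)^{k-1-n}-1$ and $2am(2ams-1)^n-1$; counting the surviving quotients gives the period $6k+4$.

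The main obstacle is to make this cascade rigorous for all parameters at once rather than in a single instance: because every application of $[m,-n,\alpha]=[m-1,1,n-1,-\alpha]$ flips the sign of everything to its right, one must show that these flips and the parity-alternating signs of the blocks cancel in just the right pattern to leave all quotients positive and matching the stated string term for term, and that the transformations respect the period boundary so the result is genuinely periodic of length $6k+4$. It is worth noting that the more direct route used for Theorem~\ref{t1a}—selecting $u,v,w,r,x$ in Proposition~\ref{p1} and factoring each $N_n$ into positive single-quotient matrices—is not available here, since the $-2(2ams-1)^k$ in $D$ would force $wr^k/v<0$; this is precisely why passing through Theorem~\ref{t1} and the reduction rules is necessary.
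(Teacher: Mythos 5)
Your proposal is correct and follows essentially the same route as the paper: the paper also derives Theorem~\ref{t1b} from Theorem~\ref{t1} by a sign substitution (it negates $a$ when $k$ is odd, and $a$, $m$, $s$ when $k$ is even, which keeps the leading partial quotient positive and thereby avoids your extra step of negating the entire expansion of $-\sqrt{D}$), splits into the two parity cases, and then applies the second identity of \eqref{rm0-} repeatedly. After your global negation your intermediate expansion coincides term for term with the paper's, and the paper carries out the final reduction cascade at the same level of detail that you describe.
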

\begin{proof}
We consider the cases where $k$ is odd and $k$ is even separately.
We first consider $k$ odd.

Replace $a$ by $-a$ and $k$ by $2k+1$ in Theorem \ref{t1}. Then
\[
D:=m\,\left(
    m\,{\left( s + a\,{\left(  2\,a\,m\,s -1\right) }^{2k+1}
         \right) }^2 - 2\,{\left(  2\,a\,m\,s -1 \right) }^{2k+1} \right)
\]
and {\allowdisplaybreaks
\begin{align*}
\sqrt{D}=\bigg[m\,( s + a\,{\left( 2\,a\,m\,s -1\right) }^{2k+1}
);&\overline{\, -a,\,
 - 2\,a\,m\,{\left( 2\,a\,m\,s-1 \right) }^{2k},}\\
  &\overline{a(2ams-1),\,
  2am{(2ams-1)}^{2k-1},}\\&
\overline{ -a(2ams-1)^2,\,
  -2am{(2ams-1)}^{2k-2},}
\\
&\phantom{sadasdsadfsa}\dots\\
& \overline{a{(2ams-1)}^{2k-1},\,2am(2ams-1),}\\
& \overline{-a{( 2ams-1)}^{2k},\,-2am,}\\
&\overline{s+a{(2ams-1)}^{2k+1},}\\
&\overline{-2am,\,-a{(2ams-1)}^{2k},}\\
&\overline{2am(2ams-1),\,a{(2ams-1)}^{2k-1},}\\
&\phantom{sadasdsadfsa}\dots,\\
&\overline{-2am{(2ams-1)}^{2k-2},\,-a(2ams-1)^2,}\\
&\overline{2am{(2ams-1)}^{2k-1},\,a(2ams-1),}\\
&\overline{-2\,a\,m\,{\left(  2\,a\,m\,s -1\right) }^{2k},\,-a,}\\
&\overline{2m\,( s + a\,{\left( 2\,a\,m\,s-1 \right) }^{2k+1})}\,
\bigg ].
\end{align*}}
We now apply the second identity at \eqref{rm0-} repeatedly to get
that {\allowdisplaybreaks\begin{align*} \sqrt{D}=\bigg[m\,( s &+
a\,{\left(2 a\,m\,s-1 \right) }^{2k+1} )-1;\overline{1,\, a-1,\,
  2\,a\,m\,{\left(2 a\,m\,s-1 \right) }^{2k}-1,}\\
  &\overline{1,\,a(2ams-1)-1,\,
  2am{(2ams-1)}^{2k-1}-1,}  \notag\\&
\overline{1,\, a(2ams-1)^2-1,\,
  2am{(2ams-1)}^{2k-2}-1,}\notag
\\
&\phantom{sadasdsadfsa}\dots  \notag \\
& \overline{1,\,a{( 2ams-1)}^{2k}-1,\,2am-1,}  \notag \\
&\overline{1,\,s+a{(2ams-1)}^{2k+1}-2,\,1,}  \notag \\
&\overline{1,\,2am-1,\,a{(2ams-1)}^{2k}-1,}  \notag   \\
&\phantom{sadasdsadfsa}\dots,  \notag \\
&\overline{1,\,2am{(2ams-1)}^{2k-2}-1,\,a(2ams-1)^2-1,}  \notag \\
&\overline{1,\,2am{(2ams-1)}^{2k-1}-1,\,a(2ams-1)-1,}  \notag \\
&\overline{1,\,2\,a\,m\,{\left(2 a\,m\,s-1 \right) }^{2k}-1,\,a-1,}  \notag \\
&\overline{2m\,( s + a\,{\left(2 a\,m\,s-1 \right)
}^{2k+1})-2}\,\bigg ].  \notag
\end{align*}
} It is clear that   $l(D) =12k+10$. This proves the theorem for
odd $k$.

We next consider the case where $k$ is even. Replace $a$ by $-a$,
$s$ by $-s$, $m$ by $-m$ and $k$ by $2k$ in Theorem \ref{t1}. Then
\[
D:=m\,\left(
    m\,{\left( s + a\,{\left(  2\,a\,m\,s -1\right) }^{2k}
         \right) }^2 - 2\,{\left(  2\,a\,m\,s -1 \right) }^{2k} \right)
\]
and {\allowdisplaybreaks
\begin{align*}
\sqrt{D}=\bigg[m\,( s + a\,{\left( 2\,a\,m\,s -1\right) }^{2k}
);&\overline{\, -a,\,
 - 2\,a\,m\,{\left( 2\,a\,m\,s-1 \right) }^{2k-1},}\\
  &\overline{a(2ams-1),\,
  2am{(2ams-1)}^{2k-2},}\\&
\overline{ -a(2ams-1)^2,\,
  -2am{(2ams-1)}^{2k-3},}
\\
&\phantom{sadasdsadfsa}\dots\\
& \overline{a{(2ams-1)}^{2k-2},\,2am(2ams-1),}\\
& \overline{-a{( 2ams-1)}^{2k-1},\,-2am,}\\
&\overline{s+a{(2ams-1)}^{2k},}\\
&\overline{-2am,\,-a{(2ams-1)}^{2k-1},}\\
&\overline{2am(2ams-1),\,a{(2ams-1)}^{2k-2},}\\
&\phantom{sadasdsadfsa}\dots,\\
&\overline{-2am{(2ams-1)}^{2k-3},\,-a(2ams-1)^2,}\\
&\overline{2am{(2ams-1)}^{2k-2},\,a(2ams-1),}\\
&\overline{-2\,a\,m\,{\left(  2\,a\,m\,s -1\right) }^{2k-1},\,-a,}\\
&\overline{2m\,( s + a\,{\left( 2\,a\,m\,s-1 \right) }^{2k})}\,
\bigg ].
\end{align*}}
We again apply the second identity at \eqref{rm0-} repeatedly and
the stated continued fraction expansion follows. It is clear that
$l(D) =12k+4$ in this case. This completes the proof for even $k$.
\end{proof}

This theorem generalizes Theorem 7.1 of Levesque and Rhin
\cite{LR86} (set $a=2$, $s=1$ and $m=a$) and Theorem 9.1 of
Levesque and Rhin \cite{LR86} (set $a=2$, $s=2$ and $m=a/2$).
Williams example in row 3 of Table 3 in \cite{W85} is the case
$m=1$ of the continued fraction above.

Remark: It seems likely from the common form of the expansions that
there should be an alternative proof of Theorem \ref{t1b} that
covers the even and odd cases simultaneously. However, we do not
pursue that here.

\begin{corollary}
Let  $m$, $s$ and $k$ be positive integers. Set
\[
D:=m\,\left(
     m\,{\left( s + \,{\left( 2\,m\,s -1 \right) }^{k}
          \right) }^2- 2\,{\left( 2\,m\,s -1 \right) }^{k} \right).
\]
Then  $l(D) =6k$ and {\allowdisplaybreaks
\begin{align}\label{cc1eq}
\sqrt{D}=\bigg[m\,( s &+ {\left(2 m\,s-1 \right) }^{k} )-1;\,
 \overline{ 2\,m\,{\left(2 m\,s-1 \right) }^{k-1}},\\
  &\overline{1,\,(2ms-1)-1,\,
  2m{(2ms-1)}^{k-2}-1,}  \notag\\&
\overline{1,\, (2ms-1)^2-1,\,
  2m{(2ms-1)}^{k-3}-1,}\notag
\\
&\phantom{sadasdsadfsa}\dots  \notag \\
& \overline{1,\,{( 2ms-1)}^{k-1}-1,\,2m-1,}  \notag \\
&\overline{1,\,s+{(2ms-1)}^{k}-2,\,1,}  \notag \\
&\overline{2m-1,\,{(2ms-1)}^{k-1}-1,\,1,}  \notag   \\
&\phantom{sadasdsadfsa}\dots,  \notag \\
&\overline{2m{(2ms-1)}^{k-3}-1,\,(2ms-1)^2-1,\,1,}  \notag \\
&\overline{2m{(2ms-1)}^{k-2}-1,\,(2ms-1)-1,\,1,}  \notag \\
&\overline{2\,m\,{\left(2 m\,s-1 \right) }^{k-1},\,  2m\,( s +
\,{\left(2 m\,s-1 \right) }^{k})-2}\,\bigg ]. \notag
\end{align}
}
\end{corollary}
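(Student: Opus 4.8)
The plan is to derive this corollary as the specialization $a=1$ of Theorem~\ref{t1b}, in exact parallel with the deduction of the corollary following Theorem~\ref{t1a}. First I would put $a=1$ in the defining formula for $D$ in Theorem~\ref{t1b}; every occurrence of $a$ there appears either as an explicit coefficient or inside the base $2ams-1$, so the substitution reproduces verbatim the polynomial $D=m\bigl(m(s+(2ms-1)^k)^2-2(2ms-1)^k\bigr)$ of the corollary, and turns the integer part $m(s+a(2ams-1)^k)-1$ into $m(s+(2ms-1)^k)-1$. Thus the surd and the leading term are immediate, and it remains only to track the effect of $a=1$ on the periodic block.

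Next I would write out the period of the expansion in \eqref{c1eq} with $a=1$. The substitution leaves every partial quotient a positive integer except for the two entries equal to ``$a-1$'': one is the second entry of the opening triple $1,\,a-1,\,2am(2ams-1)^{k-1}-1$, and the other is the middle entry of the penultimate triple $2am(2ams-1)^{k-1}-1,\,a-1,\,1$. Both become $0$, so the $a=1$ expansion is a formally valid continued fraction marred only by these two zeros.

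The substantive step is to clear the zeros using the first identity of \eqref{rm0-}, namely $[\dots,b,0,c,\dots]=[\dots,b+c,\dots]$, which is a purely local operation valid at the matrix level: inserting $\left(\begin{smallmatrix}0&1\\1&0\end{smallmatrix}\right)$ between the $b$- and $c$-factors collapses them to the single $(b+c)$-factor. At the head of the period the zero lies between $1$ and $2m(2ms-1)^{k-1}-1$, which merge to $2m(2ms-1)^{k-1}$, producing the leading periodic entry of \eqref{cc1eq}; at the tail the zero lies between $2m(2ms-1)^{k-1}-1$ and $1$, which merge to the same value $2m(2ms-1)^{k-1}$, producing the penultimate periodic entry while leaving the final term $2m(s+(2ms-1)^k)-2$ untouched. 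Reading off the remaining entries then reproduces \eqref{cc1eq} line for line. For the period length, each zero removal replaces the three symbols $b,0,c$ by the single symbol $b+c$ and so shortens the period by $2$; removing both zeros shortens the period of Theorem~\ref{t1b} from $6k+4$ to $6k$, giving $l(D)=6k$.

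The only point needing care --- and the place I expect the sole friction --- is the boundary behavior at the head of the period: one must confirm that applying $[\dots,b,0,c,\dots]=[\dots,b+c,\dots]$ there, with the preceding anchor being the integer part (for the first period) or the final period entry (for each subsequent repetition), respects the purely periodic structure. Since each merge depends only on the two immediate neighbors of a zero, and the two zeros are separated by many nonzero entries for every $k\ge1$, each removal is genuinely local and the periodicity is preserved; verifying this bookkeeping is the one step I would write out explicitly.
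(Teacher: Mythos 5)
Your proposal is correct and follows exactly the paper's own (one-line) proof: set $a=1$ in Theorem~\ref{t1b} and use the first identity of \eqref{rm0-} to absorb the two zeros arising from the ``$a-1$'' entries, each removal merging $1,0,2m(2ms-1)^{k-1}-1$ (resp.\ its mirror) into $2m(2ms-1)^{k-1}$ and shortening the period by $2$, so $6k+4$ drops to $6k$. You have simply written out the bookkeeping the paper leaves implicit.
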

\begin{proof}
Let $a=1$ in Theorem \ref{t1b} and use \eqref{rm0-} to remove the
zeros resulting from the ``$a-1$" terms.
\end{proof}

This result generalizes that in Theorem 1 of Bernstein \cite{B76}
(set $s=a+1$ and $m=1$) and also that in Theorem 5.1 of Levesque
and Rhin \cite{LR86} (set $s=1$ and $m=2a$). It also generalizes
the example in row 5 of Table 3 in \cite{W85}.

\begin{theorem}\label{t1c}
Let $a>2$, $m$, $s$ and $k$ be positive integers. Set
\[
D:=m\,\left(
    m\,{\left( -s + a\,{\left( 1 + 2\,a\,m\,s \right) }^k
         \right) }^2 - 2\,{\left( 1 + 2\,a\,m\,s \right) }^k \right).
\]
Then $l(D) =8k+4$ and {\allowdisplaybreaks
\begin{align*}
\sqrt{D}=\bigg[m\,( -s +& a\,{\left( 1 + 2\,a\,m\,s \right) }^k
)-1;\overline{1,\, a-2,\,1,
  2\,a\,m\,{\left( 1 + 2\,a\,m\,s \right) }^{k-1}-2,}\\
  &\overline{1,a(2ams+1)-2,\,1,
  2am{(2ams+1)}^{k-2}-2,}\\&
\overline{1, a(2ams+1)^2-2,\,1,
  2am{(2ams+1)}^{k-3}-2,}
\\
&\phantom{sadasdsadfsa}\dots\\
& \overline{1,a{(1 + 2ams)}^{k-1}-2,\,1,2am-2,}\\
&\overline{1,-s+a{(2ams+1)}^k-2,1,}\\
&\overline{2am-2,\,1,a{(1 +
2ams)}^{k-1}-2,1,}\\
&\phantom{sadasdsadfsa}\dots,\\
&\overline{2am{(2ams+1)}^{k-3}-2,\,1,a(2ams+1)^2-2,1,}\\
&\overline{2am{(2ams+1)}^{k-2}-2,\,1,a(2ams+1)-2,1,}\\
&\overline{2am\,{\left( 1 + 2\,a\,m\,s \right) }^{k-1}-2,\,1,a-2,1,}\\
&\overline{2m\,( -s + a\,{\left( 1 + 2\,a\,m\,s \right) }^k)-2}\,
\bigg ].
\end{align*}
}
\end{theorem}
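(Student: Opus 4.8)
The plan is to derive Theorem \ref{t1c} from Theorem \ref{t1} by a single sign substitution, in the same spirit as the proof of Theorem \ref{t1b}, but with the pleasant feature that here no split into the parities of $k$ will be needed. Writing $B=1+2ams$ for brevity, I would substitute $a\mapsto -a$ and $m\mapsto -m$ (leaving $s$ and $k$ fixed) in Theorem \ref{t1}. Since this substitution preserves the product $am$, the base $B$ is unchanged, and a direct computation shows that the polynomial of Theorem \ref{t1} is carried to
\[
m\left(m(-s+aB^k)^2-2B^k\right),
\]
which is exactly the $D$ of Theorem \ref{t1c}. The reason the base is left fixed is precisely the point of departure from Theorem \ref{t1b}: there the base $2ams-1$ forced a parity split, because $(2ams-1)^k$ and $(1-2ams)^k$ differ in sign according to the parity of $k$, whereas here the coefficient of $B^k$ comes out negative for every $k$ at once.

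Rather than manipulate the infinite periodic expansion, I would work with the \emph{finite} palindromic sequence underlying Proposition \ref{p1} and Lemma \ref{l1}, namely the partial quotients of the symmetric matrix product, which read $q_0,\vec N_{k-1},\dots,\vec N_0,\,2vx,\,\overset{\leftarrow}{N}_0,\dots,\overset{\leftarrow}{N}_{k-1},q_0$ with $q_0=m(-s+aB^k)$. After the substitution each $\vec N_n=-aB^{k-1-n},\,2amB^n$, so this sequence has strictly alternating signs $-,+,-,+,\dots$ in its first half, a negative central entry $2vx=-(aB^k-s)$, and signs $+,-,+,-,\dots$ in its second half, with positive outer terms $q_0$. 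I would then clear the negatives by applying \eqref{rm0-} from left to right, using $[m,-n,\alpha]=[m-1,1,n-1,-\alpha]$ at each interior negative and its empty-tail special case $[m,-n]=[m-1,1,n-1]$ at the trailing entry. Because each application negates the entire tail, every one of the $4k+2$ non-leading entries is eventually processed; one checks that each $a B^{\,j}$- and $2amB^{\,j}$-type entry is decremented twice (once when it is created as the ``$n-1$'' term and once when it serves as the ``$m-1$'' predecessor of the next negative), producing the ``$-2$'' offsets, while a $1$ is inserted before each.

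The outcome is an all-positive palindromic sequence $q_0',q_1,\dots,q_1,q_0'$ with $q_0'=m(-s+aB^k)-1$ and interior entries matching the statement; Lemma \ref{l1} then gives $\sqrt{D}=[q_0';\overline{q_1,\dots,q_1,2q_0'}]$ with $2q_0'=2m(-s+aB^k)-2$, and the palindrome of length $4k+3$ has become one of length $8k+5$, so the period length is $8k+4$. The hypothesis $a>2$ enters exactly here: it is what guarantees that the resulting ``$a-2$'' entries, and hence all the reduced partial quotients, are positive integers. I would confirm the identity $\tfrac{w}{v}=D$ (hence the value $\sqrt D$) by Lemma \ref{l2} and \eqref{sqrteq}, exactly as in Proposition \ref{p1}.

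I expect the main obstacle to be the bookkeeping in the middle step: tracking how the sign-flip propagates through the whole palindrome and verifying that the ``$-1$'' and ``$-2$'' offsets land on the correct entries, especially at the two seams — the negative central term $-(aB^k-s)$ and the trailing term $-q_0$, the latter being the one place where the empty-tail reduction $[m,-n]=[m-1,1,n-1]$ is used. A clean way to organize this is to verify the reduction explicitly for $k=1$ and $k=2$, then argue the general case by describing the effect of processing one alternating pair at a time; the palindromic symmetry of the sequence then halves the work, since the second half reduces as the mirror image of the first.
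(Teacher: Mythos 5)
Your proposal is correct and follows essentially the same route as the paper: the paper's proof of Theorem \ref{t1c} is precisely the substitution $a\mapsto -a$, $m\mapsto -m$ in Theorem \ref{t1} followed by repeated application of the second identity in \eqref{rm0-} to clear the resulting negative partial quotients. Your additional bookkeeping (the alternating sign propagation, the count $4k+3\mapsto 8k+5$, the role of $a>2$, and the empty-tail reduction at the terminal $-q_0$) is sound and in fact fills in details the paper leaves to the reader.
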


\begin{proof}
Replace $a$ by $-a$ and $m$ by $-m$ in Theorem \ref{t1}. Then
\[
D:=m\,\left(
    m\,{\left(- s + a\,{\left(  2\,a\,m\,s +1\right) }^{k}
         \right) }^2 - 2\,{\left(  2\,a\,m\,s +1 \right) }^{k} \right)
\]
and {\allowdisplaybreaks
\begin{align*}
\sqrt{D}=\bigg[m\,( -s + a\,{\left( 2\,a\,m\,s +1\right) }^{k}
);&\overline{\, -a,\,
 2\,a\,m\,{\left( 2\,a\,m\,s+1 \right) }^{k-1},}\\
  &\overline{-a(2ams+1),\,
  2am{(2ams+1)}^{k-2},}\\&
\overline{ -a(2ams+1)^2,\,
  2am{(2ams+1)}^{k-3},}
\\
&\phantom{sadasdsadfsa}\dots\\
& \overline{-a{(2ams-1)}^{k-2},\,2am(2ams+1),}\\
& \overline{-a{( 2ams+1)}^{k-1},\,2am,}\\
&\overline{s-a{(2ams+1)}^{k},}\\
&\overline{2am,\,-a{(2ams+1)}^{k-1},}\\
&\overline{2am(2ams+1),\,-a{(2ams+1)}^{k-2},}\\
&\phantom{sadasdsadfsa}\dots,\\
&\overline{2am{(2ams+1)}^{k-3},\,-a(2ams+1)^2,}\\
&\overline{2am{(2ams+1)}^{k-2},\,-a(2ams+1),}\\
&\overline{2\,a\,m\,{\left(  2\,a\,m\,s -1\right) }^{k-1},\,-a,}\\
&\overline{2m\,( -s + a\,{\left( 2\,a\,m\,s+1 \right) }^{k})}\,
\bigg ].
\end{align*}}
We again apply the second identity at \eqref{rm0-} repeatedly and
the stated continued fraction expansion follows. It is clear that
$l(D) =8k+4$ in this case. This completes the proof.
\end{proof}

Williams continued fraction in row 9 of Table 3 in \cite{W85} is
the case $m=1$ of the theorem above.

\begin{corollary}\label{c1}
Let  $m$, $s$ and $k>1$ be positive integers such that $ms>1$. Set
\[
D:=m\,\left(
    m\,{\left( -s + \,{\left( 1 + 2\,m\,s \right) }^k
         \right) }^2 - 2\,{\left( 1 + 2\,m\,s \right) }^k \right).
\]
Then $l(D) =8k$ and {\allowdisplaybreaks
\begin{align*}
\sqrt{D}=\bigg[m\,( -s +& {\left( 1 + 2\,m\,s \right) }^k
)-2;\overline{1,\,
  2\,m\,{\left( 1 + 2\,m\,s \right) }^{k-1}-3,}\\
  &\overline{1,(2ms+1)-2,\,1,
  2m{(2ms+1)}^{k-2}-2,}\\&
\overline{1, (2ms+1)^2-2,\,1,
  2m{(2ms+1)}^{k-3}-2,}
\\
&\phantom{sadasdsadfsa}\dots\\
& \overline{1,{(1 + 2ms)}^{k-1}-2,\,1,2m-2,}\\
&\overline{1,-s+{(2ms+1)}^k-2,1,}\\
&\overline{2m-2,\,1,{(1 +
2ms)}^{k-1}-2,1,}\\
&\phantom{sadasdsadfsa}\dots,\\
&\overline{2m{(2ms+1)}^{k-3}-2,\,1,(2ms+1)^2-2,1,}\\
&\overline{2m{(2ms+1)}^{k-2}-2,\,1,(2ms+1)-2,1,}\\
&\overline{2m\,{\left( 1 + 2\,m\,s \right) }^{k-1}-3,\,1,}\\
&\overline{2m\,( -s +{\left( 1 + 2\,m\,s \right) }^k)-4}\, \bigg
].
\end{align*}
}
\end{corollary}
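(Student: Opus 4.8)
The plan is to mimic the corollaries following Theorems~\ref{t1a} and \ref{t1b}: specialize the parent theorem at $a=1$ and then clear the non-regular partial quotients using the transformations in \eqref{rm0-}. The parent here is Theorem~\ref{t1c}, and the essential new feature is that the ``$a-2$'' partial quotients become $-1$ rather than $0$, so the harmless zero-absorbing rule $[m,n,0,p,\alpha]=[m,n+p,\alpha]$ no longer suffices by itself; one must also invoke the negative-clearing rule $[m,-n,\alpha]=[m-1,1,n-1,-\alpha]$. First I would check that, after setting $a=1$, the \emph{only} offending entries are these two ``$a-2=-1$'' terms --- one just after $a_0$ (forward) and one just before the final $2a_0$ (backward). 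Every other ``$a(1+2ams)^{j}-2$'' entry equals $(1+2ms)^{j}-2\ge 1$ for $j\ge 1$, so no further negatives appear, and the hypotheses $k>1$ and $ms>1$ keep the remaining entries positive.

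The computational heart is a single local identity. I would verify, directly or by one application of the second rule in \eqref{rm0-} followed by zero-absorption, that for any tail value $\tau$,
\[
[A;\,1,\,-1,\,1,\,Y,\,\tau]=[A-1;\,1,\,Y-1,\,\tau].
\]
Thus the four-term block $1,-1,1,Y$ collapses to the two-term block $1,\,Y-1$ while decrementing the \emph{preceding} partial quotient by $1$. Applying this at the front with $A=a_0=m(-s+(1+2ms)^k)-1$ and $Y=2m(1+2ms)^{k-1}-2$ explains both the collapse into $1,\,2m(1+2ms)^{k-1}-3$ and the drop of the integer part to $a_0-1$. By the palindromic symmetry underlying Lemma~\ref{l1}, the mirror-image reduction clears the backward $-1$, collapsing $2m(1+2ms)^{k-1}-2,\,1,\,-1,\,1$ into $2m(1+2ms)^{k-1}-3,\,1$ and pushing its decrement onto the terminal entry, which becomes $2m(-s+(1+2ms)^k)-4=2(a_0-1)$. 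The central block $1,\,-s+(2ms+1)^k-2,\,1$ is untouched, so the whole expansion retains the required palindromic shape $[a_0';\overline{\dots,2a_0'}]$ with $a_0'=a_0-1$.

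To finish, I would appeal to uniqueness of the regular continued fraction expansion: once every partial quotient is a positive integer and the expansion has the form $[a_0';\overline{q_1,\dots,q_1,2a_0'}]$, it must be \emph{the} regular expansion of $\sqrt{D}$, so no independent computation of the value is needed beyond Theorem~\ref{t1c}. The period length then follows by counting: each of the two reductions deletes two partial quotients, whence $l(D)=(8k+4)-2\cdot 2=8k$.

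I expect the main obstacle to be the bookkeeping at the period boundary, where the two $-1$'s sit symmetrically on either side of the wrap-around segment $\dots,\,1,\,2a_0,\,1,\dots$. One must confirm that the forward and backward local reductions do not interfere --- that the front decrement lands on $a_0$ and the back decrement lands on the terminal $2a_0$ consistently, so that $2a_0'=2(a_0-1)$ rather than some conflicting value, and that the newly created boundary entries remain positive. A secondary point to watch is genuinely small parameters: I would confirm that $k>1$ and $ms>1$ are exactly strong enough to keep entries such as $2m-2$ and $2m(1+2ms)^{k-1}-3$ admissible, applying the first rule in \eqref{rm0-} to absorb any residual zero should a degenerate value arise.
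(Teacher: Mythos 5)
Your proposal is correct and takes essentially the same route as the paper, whose entire proof is the one-line specialization $a=1$ in Theorem~\ref{t1c} followed by an appeal to \eqref{rm0-} to remove the resulting zeroes and negatives; your local identity $[A;\,1,\,-1,\,1,\,Y,\,\tau]=[A-1;\,1,\,Y-1,\,\tau]$ and its mirror are exactly the mechanism that carries this out, and your count $(8k+4)-4=8k$ matches. Your closing caution about small parameters is well placed (e.g.\ $m=1$ makes the $2m-2$ entries vanish), but this affects the statement as printed rather than the validity of your argument.
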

\begin{proof}
This follows after letting $a=1$ in Theorem \ref{t1c} and using
\eqref{rm0-} to remove the resulting zeroes and negatives.
\end{proof}

Williams continued fraction in row 8 of Table 3 in \cite{W85} is
the case $m=1$ of the corollary above.

\begin{corollary}\label{c2}
Let  $k>2$ and $s$  be positive integers. Set
\[
D:=
    {\left(  \,{\left( 1 + 2\,s \right) }^k-s
         \right) }^2 - 2\,{\left( 1 + 2\,s \right) }^k.
\]
Then $l(D) =8k-4$ and {\allowdisplaybreaks
\begin{align*}
\sqrt{D}=\bigg[& {\left( 1 + 2\,s \right) }^k -s-2;\overline{1,\,
  2\,{\left( 1 + 2\,s \right) }^{k-1}-3,}\\
  &\overline{1,(2s+1)-2,\,1,
  2{(2s+1)}^{k-2}-2,}\\&
\overline{1, (2s+1)^2-2,\,1,
  2{(2s+1)}^{k-3}-2,}
\\
&\phantom{sadasdsadfsa}\dots\\
& \overline{1,{(1 + 2s)}^{k-2}-2,\,1,\,2(2s+1)-2}\\
& \overline{1,{(1 + 2s)}^{k-1}-2,\,2,}\\
&\overline{-s+{(2s+1)}^k-2,}\\
&\overline{2,{(1 +
2s)}^{k-1}-2,1,}\\
&\overline{2(2s+1)-2,\,1,\,,{(1 +
2s)}^{k-1}-2,1,}\\
&\phantom{sadasdsadfsa}\dots,\\
&\overline{2{(2s+1)}^{k-3}-2,\,1,(2s+1)^2-2,1,}\\
&\overline{2{(2s+1)}^{k-2}-2,\,1,(2s+1)-2,1,}\\
&\overline{2{\left( 1 + 2\,s \right) }^{k-1}-3,\,1,}\\
&\overline{2( s +{\left( 1 + 2\,s \right) }^k)-4}\, \bigg ].
\end{align*}
}
\end{corollary}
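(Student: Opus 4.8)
As with the earlier corollaries in this section, the plan is to specialize one parameter to $1$ and then restore admissibility with the reductions in \eqref{rm0-}. Concretely, I would read this statement off as the case $m=1$ of Corollary \ref{c1}. The first thing to verify is purely algebraic: putting $m=1$ in the polynomial $D=m\left(m\left(-s+(1+2ms)^k\right)^2-2(1+2ms)^k\right)$ of Corollary \ref{c1} collapses it to $\left((1+2s)^k-s\right)^2-2(1+2s)^k$, which is exactly the $D$ of the present statement. So the desired expansion is the $m=1$ image of the Corollary \ref{c1} expansion, rewritten in admissible form.

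Next I would substitute $m=1$ throughout that expansion and locate the entries that degenerate. With $m=1$ every entry of shape $(1+2s)^j-2$ with $j\ge 1$ is $\ge 2s-1\ge 1$, every entry $2(1+2s)^j-2$ with $j\ge 1$ is $\ge 4$, and the flanking first quotient $2(1+2s)^{k-1}-3$ together with the terminal $2((1+2s)^k-s)-4=2a_0$ are positive; the \emph{only} entries that collapse are the two occurrences of $2m-2$, namely the one ending the block $1,\,(1+2s)^{k-1}-2,\,1,\,2m-2$ just before the central quotient and its mirror $2m-2,\,1,\,(1+2s)^{k-1}-2,\,1$ just after it. Each such zero is flanked by $1$'s, so the identity $[\dots,n,0,p,\dots]=[\dots,n+p,\dots]$ of \eqref{rm0-} merges $1,0,1\mapsto 2$ at both sites. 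These two merges sit symmetrically about the centre and produce exactly the two $2$'s adjacent to the central quotient $-s+(2s+1)^k-2$ in the displayed expansion, leaving the palindrome otherwise intact. Since each merge deletes two entries from the period of length $8k$ furnished by Corollary \ref{c1}, the new period has length $8k-4$, as claimed.

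The points that need care are threefold. The hypothesis $k>2$ is precisely what guarantees that no \emph{other} entry degenerates: the block $1,\,(1+2s)^{k-2}-2,\,1,\,2(2s+1)-2$ has positive first entry exactly when $k-2\ge 1$, whereas at $k=2$ that entry would be $(1+2s)^0-2=-1$ and a different reduction would be forced. Second, because each application of \eqref{rm0-} replaces $1,0,1$ by the positive value $2$, the reduction cannot cascade, so a single pass suffices and the symmetric shape survives automatically; this is routine to check. The genuinely delicate case, which I expect to be the main obstacle, is $s=1$: then $ms=1$ and Corollary \ref{c1} does not formally apply, so this case must be recovered either by verifying that the derivation of Corollary \ref{c1} (and hence the displayed expansion) persists at $s=1$—the restriction $ms>1$ appearing to be a conservative inheritance from that intermediate statement rather than a genuine constraint on $D$—or by re-deriving the expansion at $a=m=1$ directly from Theorem \ref{t1c} and removing the extra zeros and negatives that the double substitution produces. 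With that case settled, matching the reduced expansion against the displayed one term by term completes the proof.
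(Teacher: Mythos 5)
Your proposal is correct and follows exactly the paper's own route: the paper's entire proof is the single sentence that the result ``follows after letting $m=1$ in Corollary \ref{c1} and using \eqref{rm0-} to remove the zero resulting from the $2m-2$ terms.'' Your additional checks --- that only the two $2m-2$ entries degenerate, that each $1,0,1$ merges symmetrically to $2$ so the period drops from $8k$ to $8k-4$, and the observation that the hypothesis $ms>1$ of Corollary \ref{c1} formally excludes $s=1$ --- are all sound and in fact more careful than the paper, which passes over that hypothesis mismatch in silence.
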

\begin{proof}
This follows after letting $m=1$ in Corollary \ref{c1} and using
\eqref{rm0-} to remove the zero resulting from the ``$2m-2$"
terms.
\end{proof}
The result above is Theorem 4 in Bernstein \cite{B76}.

\begin{corollary}\label{c3}
Let $m$, $s$ and $k>1$ be positive integers. Set
\[
D:=m\,\left(
    m\,{\left( -s + 2\,{\left( 1 + 4m\,s \right) }^k
         \right) }^2 - 2\,{\left( 1 + 4m\,s \right) }^k \right).
\]
Then $l(D) =8k$ and {\allowdisplaybreaks
\begin{align*}
\sqrt{D}=\bigg[m\,( -s +& 2\,{\left( 1 + 4m\,s \right) }^k
)-1;\overline{ 2,
  4m\,{\left( 1 + 4m\,s \right) }^{k-1}-2,}\\
  &\overline{1,2(4ms+1)-2,\,1,
  4m{(4ms+1)}^{k-2}-2,}\\&
\overline{1, 2(4ms+1)^2-2,\,1,
  4m{(4ms+1)}^{k-3}-2,}
\\
&\phantom{sadasdsadfsa}\dots\\
& \overline{1,2{(1 + 4ms)}^{k-1}-2,\,1,4m-2,}\\
&\overline{1,-s+2{(4ms+1)}^k-2,1,}\\
&\overline{4m-2,\,1,2{(1 +
4ms)}^{k-1}-2,1,}\\
&\phantom{sadasdsadfsa}\dots,\\
&\overline{4m{(4ms+1)}^{k-3}-2,\,1,2(4ms+1)^2-2,1,}\\
&\overline{4m{(4ms+1)}^{k-2}-2,\,1,2(4ms+1)-2,1,}\\
&\overline{4m\,{\left( 1 + 4m\,s \right) }^{k-1}-2,\,2,}\\
&\overline{2m\,( -s + 2\,{\left( 1 + 4m\,s \right) }^k)-2}\, \bigg
].
\end{align*}
}
\end{corollary}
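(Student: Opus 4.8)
The plan is to obtain this corollary as the boundary case $a=2$ of Theorem \ref{t1c}, a value excluded there precisely because the construction produces partial quotients of the form $a-2$, which vanish when $a=2$. First I would check that setting $a=2$ in the polynomial $D$ of Theorem \ref{t1c} reproduces the $D$ stated here: since $2am=4m$ and $a(1+2ams)^k=2(1+4ms)^k$, the two expressions agree term by term. Because Theorem \ref{t1c} descends from the matrix identity of Proposition \ref{p1} (via the substitutions $a\mapsto -a$, $m\mapsto -m$ in Theorem \ref{t1}), and that identity is an algebraic equality valid for every integer value of the parameters, the displayed expansion of Theorem \ref{t1c} stays correct at $a=2$ as a formal continued fraction; the only new feature is that it now contains some inadmissible (here, zero) partial quotients that must be cleared.

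Next I would locate those zeros. With $a=2$, every partial quotient of the form $a-2$ becomes $0$, and there are exactly two in the fundamental period: one in the leading block, where it appears as $1,0,1,\,4m(1+4ms)^{k-1}-2$, and one in the trailing block, where it appears as $4m(1+4ms)^{k-1}-2,\,1,0,1$. In each case the $0$ is flanked by a $1$ on either side, so I would invoke the first reduction in \eqref{rm0-}, namely $[m,n,0,p,\alpha]=[m,n+p,\alpha]$ with $n=p=1$, to collapse each block $1,0,1$ into a single $2$. (At the matrix level this is just $\bigl(\begin{smallmatrix}1&1\\1&0\end{smallmatrix}\bigr)\bigl(\begin{smallmatrix}0&1\\1&0\end{smallmatrix}\bigr)\bigl(\begin{smallmatrix}1&1\\1&0\end{smallmatrix}\bigr)=\bigl(\begin{smallmatrix}2&1\\1&0\end{smallmatrix}\bigr)$, so the reduction is rigorous.) This turns the leading block into $2,\,4m(1+4ms)^{k-1}-2,\dots$ and the trailing block into $\dots,4m(1+4ms)^{k-1}-2,\,2$, which is exactly the expansion written in the corollary. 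Since the collapses occur in identical positions within every period, and the period boundary carries the preceding entry $2a_0$ (so the leading $1,0,1$ is always properly flanked), the outcome is again a genuinely periodic expansion.

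For the period length, Theorem \ref{t1c} gives $l(D)=8k+4$, and each collapse $1,0,1\mapsto 2$ deletes two partial quotients; with two such collapses we get $l(D)=8k+4-4=8k$, as claimed. The step requiring the most care, and the main obstacle, is verifying that no other entry degenerates, i.e. that every surviving partial quotient is a strictly positive integer; this is exactly where the hypothesis $k>1$ is needed. The binding constraint is the family of entries $2(1+4ms)^{k-1}-2$ occurring on either side of the center, which is positive precisely when $(1+4ms)^{k-1}>1$, that is when $k\geq 2$; at $k=1$ these (and others of the same shape) would themselves vanish and force further, period-altering reductions. Once positivity is confirmed for $k>1$, the reduced sequence is palindromic with final entry $2a_0=2m(-s+2(1+4ms)^k)-2$, so Lemma \ref{l1} identifies it as the regular continued fraction of $\sqrt{D}$, completing the proof.
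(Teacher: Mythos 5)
Your proposal is correct and follows exactly the paper's route: the paper proves Corollary \ref{c3} by setting $a=2$ in Theorem \ref{t1c} and applying the first identity of \eqref{rm0-} to collapse the two $1,0,1$ blocks arising from the vanishing ``$a-2$'' terms. Your write-up merely makes explicit the details the paper leaves implicit (validity of the boundary case $a=2$, the count $8k+4-4=8k$, and the positivity check that pins down the hypothesis $k>1$).
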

\begin{proof}
This follows after letting $a=2$ in Theorem \ref{t1c} and using
\eqref{rm0-} to remove the zero resulting from the ``$a-2$" terms.
\end{proof}

We next use Propositions \ref{p2} and \ref{p3} to construct
families of long continued fractions with no central partial
quotient in the fundamental period.
\begin{theorem}\label{t2}
Let $b$,  $s$ and $k$ be positive integers. Set
\[
D:=(4 b s+1)^{k}+\left(b (4 b s+1)^{k}+s\right)^2.
\]
Then $l(D) =2k+1$ and {\allowdisplaybreaks
\begin{align*}
\sqrt{D}=\bigg[ b (4 b s+1)^{ k}+s;\,\, &\overline{ 2 b,\,
\hspace{10pt}2 b (4 b
s+1)^{ k-1},}\\
&\overline{2 b (4 b s+1),\,2 b (4 b s+1)^{ k-2} ,}\\
& \overline{ 2 b (4 b s+1)^2,\,2 b (4 b s+1)^{ k-3},}\\
&\phantom{sadasdsadfsa}\dots\\
&\overline{2 b (4 b s+1)^{\lfloor (k-1)/2\rfloor}, \,2 b (4 b s+1)^{k-1-\lfloor (k-1)/2\rfloor},}\\
&\phantom{sadasdsadfsa}\dots,\\
&\overline{2 b (4 b s+1)^{ k-3},\, 2 b (4 b s+1)^2,}\\
&\overline{2 b (4 b s+1)^{ k-2},\,2 b (4 b s+1),}\\
&\overline{ 2 b (4 b s+1)^{ k-1},\,2 b,\,\,}\overline{ 2( b (4 b
s+1)^{ k}+s)}\bigg ].
\end{align*}
} Here we understand the mid-point of the period to be just after
the $2 b (4 b s+1)^{\lfloor (k-1)/2\rfloor}$ term, if $k$ is odd,
and just after the $2 b (4 b s+1)^{k-1-\lfloor (k-1)/2\rfloor}$
term, if $k$ is even.
\end{theorem}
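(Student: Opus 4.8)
The plan is to apply Proposition~\ref{p2} with an appropriate choice of the parameters $u$, $v$, $x$ and $r$, exactly as in the proofs of Theorems~\ref{t1}--\ref{t1c}, which all follow the same template. Proposition~\ref{p2} produces continued fractions of the form $\sqrt{r^{2k}+x^2}$ with no central partial quotient, which matches the stated $D=(4bs+1)^k+(b(4bs+1)^k+s)^2$ once we recognize $r^{2k}$ as $(4bs+1)^k$ and $x$ as $b(4bs+1)^k+s$. Since $r^{2k}=(4bs+1)^k$ forces $r^2=4bs+1$, the natural guess is $r=4bs+1$ paired with a halving of the exponent, or more cleanly to set things up so that $r=\sqrt{4bs+1}$ is itself an integer; but because the period here is $2k+1$ rather than $4k$-scale, I expect instead to take $r=(4bs+1)$ directly and work with the matrix $N_n$ of Proposition~\ref{p2} using a \emph{single} elementary factor rather than the two-factor products seen earlier. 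Concretely, I would try
\[
v=2b,\qquad x=b(4bs+1)^k+s,\qquad r=4bs+1,
\]
and solve for $u$ from the $(2,2)$-entry condition $ru-2vx$ being the required integer, just as $u$ was determined in Theorem~\ref{t1}.

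First I would verify that with these values the off-diagonal entries $r^{k-1-n}v$ and $r^{k+n}v$ of $N_n$ in Proposition~\ref{p2} are positive integers, and that the $(2,2)$-entry $ru-2vx$ works out to a value that lets $N_n$ decompose as a product of the form \eqref{Neq2} with positive-integer partial quotients. The central computation is to show that each $N_n$ factors as a \emph{single} matrix $\bigl(\begin{smallmatrix}c_n&1\\1&0\end{smallmatrix}\bigr)$, i.e.\ that $N_n$ already has the shape of one partial-quotient matrix — this would explain why the period length is only $2k+1$ (one quotient per $N_n$, with the $k$ forward copies, the $k$ backward copies overlapping at the palindrome centre, plus the terminal $2x$). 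I expect $\vec{N}_n = 2b(4bs+1)^{\,?}$ to be a single term matching the pattern $2b(4bs+1)^{k-1-n}$ in the forward sweep; then the forward list $\vec{N}_{k-1}\cdots\vec{N}_0$ reads off as $2b,\,2b(4bs+1),\dots,2b(4bs+1)^{k-1}$ and the reversed list $\overset{\leftarrow}{N}_0\cdots\overset{\leftarrow}{N}_{k-1}$ gives the descending tail, exactly as displayed.

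The main obstacle I anticipate is the palindrome-centre bookkeeping that produces the stated midpoint behaviour involving $\lfloor(k-1)/2\rfloor$. Because there is no central partial quotient in Proposition~\ref{p2}, the forward and reversed sweeps meet in the middle, and when each $N_n$ contributes only one term the two middle terms $2b(4bs+1)^{\lfloor(k-1)/2\rfloor}$ and $2b(4bs+1)^{k-1-\lfloor(k-1)/2\rfloor}$ come from $\vec{N}_0$ and $\overset{\leftarrow}{N}_0$ abutting; I would need to check the parity cases $k$ even and $k$ odd separately to confirm the floor expressions and the placement of the midpoint as described in the theorem's final sentence. Once the single-factor decomposition of $N_n$ is established and the positivity of all entries is confirmed, the continued fraction expansion and the period length $2k+1$ follow immediately from \eqref{sqrteq2}, and the square-freeness/congruence hypotheses of Proposition~\ref{p2} are not needed for the expansion itself (only for the fundamental-unit statement, which this theorem does not assert).
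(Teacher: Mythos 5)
Your instinct to use Proposition~\ref{p2} with $v=2b$, $r=4bs+1$ and $x=b(4bs+1)^k+s$ is in the right neighbourhood, but the plan breaks down at the point you yourself flag as uncertain, and the fix you propose cannot work. Proposition~\ref{p2} produces $D=r^{2k}+x^2$; with $r=4bs+1$ and the proposition's $k$ equal to the theorem's $k$ this is $(4bs+1)^{2k}+x^2$, not $(4bs+1)^{k}+x^2$, and no choice of $u$ repairs that. Moreover the ``single elementary factor'' decomposition you hope for is impossible on its face: a matrix $\bigl(\begin{smallmatrix}c&1\\1&0\end{smallmatrix}\bigr)$ has both off-diagonal entries equal to $1$ and $(2,2)$-entry $0$, whereas $N_n$ in Proposition~\ref{p2} has off-diagonal entries $r^{k-1-n}v$ and $r^{k+n}v$, which cannot both be $1$. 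The actual resolution for even $k$ is to halve the exponent: write $k=2k'$ and apply Proposition~\ref{p2} with parameter $k'$, taking $u=1+4b^2(1+4bs)^{2k'-1}$; then each $N_n$ factors into \emph{two} elementary matrices, $\vec{N}_n=2b(1+4bs)^{k'-1-n},\,2b(1+4bs)^{k'+n}$, and the count is $2\cdot 2k'+1=2k+1$. This two-factor structure is also what produces the alternation between ascending and descending powers in the displayed expansion, which a one-quotient-per-$N_n$ scheme could not reproduce.

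The second, larger gap is the odd case. If $k=2k'+1$ then $(4bs+1)^{k}$ is never of the form $r^{2j}$ for an integer $r$, so Proposition~\ref{p2} is simply unavailable; the parity issue is not mere ``palindrome-centre bookkeeping.'' The paper handles odd $k$ with Proposition~\ref{p3}, taking $r=4bs+1$, $v=2b$, $w=s$, $q=b(1+4bs)^{k'}$ and $u=1+4b^2(1+4bs)^{2k'}$, so that $r^{k'}(r^{k'}+4qw)=(4bs+1)^{2k'+1}$; the two central quotients $2q,2q$ of that proposition supply the adjacent middle pair $2b(4bs+1)^{k'},2b(4bs+1)^{k'}$ of the palindrome, and again each $N_n$ contributes two partial quotients. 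Without invoking Proposition~\ref{p3} (or some substitute) your argument cannot cover half the cases of the theorem.
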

\begin{proof}
We first prove this for even $k$.  In Proposition \ref{p2} set
$v=2b$, $r=1+4 b s$, $u=1+4 b^2(1+4 b s)^{2k-1}$ and $x= s+b(1+4 b
s)^{2k}$. Then
\[
D=(4 b s+1)^{2k}+\left(b (4 b s+1)^{2k}+s\right)^2.
\]
With these values also,
\[
N_n =\left(
\begin{matrix}
u & r^{k-1-n}v \\
r^{k+n}v     & r u -2 v  x
\end{matrix}
\right) =\left(
\begin{matrix}
2b(1+4bs)^{k-1-n} & 1 \\
1    & 0
\end{matrix}
\right)\left(
\begin{matrix}
2b(1+4bs)^{k+n} & 1 \\
1     & 0
\end{matrix}
\right),
\]
and so $\vec{N}_{n}=2b(1+4bs)^{k-1-n},\,2b(1+4bs)^{k+n}$. Thus
{\allowdisplaybreaks
\begin{align*}
\sqrt{D}=\bigg[ b (4 b s+1)^{2 k}+s;\,\, &\overline{ 2 b,\,
\hspace{10pt}2 b (4 b
s+1)^{2 k-1},}\\
&\overline{2 b (4 b s+1),\,2 b (4 b s+1)^{2 k-2} ,}\\
& \overline{ 2 b (4 b s+1)^2,\,2 b (4 b s+1)^{2 k-3},}\\
&\phantom{sadasdsadfsa}\dots\\
&
\overline{ 2 b (4 b s+1)^{k-1},\,2 b (4 b s+1)^k,}\\
&\overline{2 b (4 b s+1)^k, \,2 b (4 b s+1)^{k-1},}\\
&\phantom{sadasdsadfsa}\dots,\\
&\overline{2 b (4 b s+1)^{2 k-3},\, 2 b (4 b s+1)^2,}\\
&\overline{2 b (4 b s+1)^{2 k-2},\,2 b (4 b s+1),}\\
&\overline{ 2 b (4 b s+1)^{2 k-1},\,2 b,\,\,}\overline{ 2( b (4 b
s+1)^{2 k}+s)}\bigg ],
\end{align*}
} and the result  follows for even $k$.

For odd $k$ we use Proposition \ref{p3} above, where we set
$v=2b$, $r=1+4 b s$, $u=1+4 b^2(1+4 b s)^{2k}$, $q=b(1+4 b s)^{k}$
and $w= s$. Then
\[
D=(4 b s+1)^{2k+1}+\left(b (4 b s+1)^{2k+1}+s\right)^2.
\]
With these values also,
\begin{align*}
N_n &=\left(
\begin{matrix}
u & r^{k-1-n}v \\
r^{n}v\,\left( r^k + 4\,q\,w \right)    & r\,u - 2\,q\,r^k\,v -
    2\,\left( 1 + 4\,q^2 \right) \,v\,w
\end{matrix}
\right)\\&=\left(
\begin{matrix}
2b(1+4bs)^{k-1-n} & 1 \\
1    & 0
\end{matrix}
\right)\left(
\begin{matrix}
2b(1+4bs)^{k+n+1} & 1 \\
1     & 0
\end{matrix}
\right),
\end{align*}
and so $\vec{N}_{n}=2b(1+4bs)^{k-1-n},\,2b(1+4bs)^{k+n+1}$. Thus
{\allowdisplaybreaks
\begin{align*} \sqrt{D}=\bigg[ b (4 b s+1)^{2
k+1}+s;\,\, &\overline{ 2 b,\, \hspace{10pt}2 b (4 b
s+1)^{2 k},}\\
&\overline{2 b (4 b s+1),\,2 b (4 b s+1)^{2 k-1} ,}\\
& \overline{ 2 b (4 b s+1)^2,\,2 b (4 b s+1)^{2 k-2},}\\
&\phantom{sadasdsadfsa}\dots\\
&\overline{ 2 b (4 b s+1)^{k-1},\,2 b (4 b s+1)^{k+1},}\\
&\overline{2 b (4 b s+1)^k, \,2 b (4 b s+1)^{k},}\\
&\overline{2 b (4 b s+1)^{k+1}, \,2 b (4 b s+1)^{k-1},}\\
&\phantom{sadasdsadfsa}\dots,\\
&\overline{2 b (4 b s+1)^{2 k-2},\, 2 b (4 b s+1)^2,}\\
&\overline{2 b (4 b s+1)^{2 k-1},\,2 b (4 b s+1),}\\
&\overline{ 2 b (4 b s+1)^{2 k},\,\,2 b,\,\,}\overline{ 2( b (4 b
s+1)^{2 k+1}+s)}\bigg ],
\end{align*}
} and the result again follows.
\end{proof}

The result above (without the explicit continued fraction
expansion) appears in row 1 of Table 2 in \cite{W85}.

As previously, we can let $b$ or $r$ take negative integral values
and produce a new long continued fraction.
\begin{corollary}\label{c6}
Let $b$,  $s$ and $k$ be positive integers. Set
\[
D:=(4 b s-1)^{2k}+\left(b (4 b s-1)^{2k}-s\right)^2.
\]
Then $l(D) =6k+1$ and {\allowdisplaybreaks
\begin{align*}
\sqrt{D}=\bigg[ b (4 b s-1)^{2 k}-s;&\overline{ 2 b-1,\,1, 2 b (4 b
s-1)^{2 k-1}-1,}\\
&\overline{2 b (4 b s-1)-1,\,1,2 b (4 b s-1)^{2 k-2}-1 ,}\\
& \overline{ 2 b (4 b s-1)^2-1,\,1,2 b (4 b s-1)^{2 k-3}-1,}\\
&\phantom{sadasdsadfsa}\dots\\
&
\overline{ 2 b (4 b s-1)^{k-1}-1,\,1,2 b (4 b s-1)^k-1,}\\
&\overline{2 b (4 b s-1)^k-1, \,1,2 b (4 b s-1)^{k-1}-1,}\\
&\phantom{sadasdsadfsa}\dots,\\
&\overline{2 b (4 b s-1)^{2 k-3}-1,\, 2 b (4 b s-1)^2-1,}\\
&\overline{2 b (4 b s-1)^{2 k-2}-1,\,1,2 b (4 b s-1)-1,}\\
&\overline{ 2 b (4 b s-1)^{2 k-1}-1,\,1,2 b-1,\,\,}\overline{ 2( b
(4 b s-1)^{2 k}-s)}\bigg ]
\end{align*}
}
\end{corollary}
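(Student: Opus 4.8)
The plan is to obtain this expansion from Theorem~\ref{t2} by the substitution $s\mapsto -s$, in exactly the way Theorems~\ref{t1b} and~\ref{t1c} were deduced from Theorem~\ref{t1}. Since the exponent in $D$ here is the even number $2k$, I would start not from the general statement of Theorem~\ref{t2} but from the expansion established in its even branch, namely that of $\sqrt{(4bs+1)^{2k}+(b(4bs+1)^{2k}+s)^2}$, whose periodic part is $\vec{N}_{k-1}\dots\vec{N}_{0}\,\overset{\leftarrow}{N}_{0}\dots\overset{\leftarrow}{N}_{k-1},\,2x$ with $\vec{N}_n=2b(1+4bs)^{k-1-n},\,2b(1+4bs)^{k+n}$ and $2x=2(b(4bs+1)^{2k}+s)$. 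Treating this as a formal (polynomial) continued fraction identity permits the substitution even though it will momentarily produce negative partial quotients.

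Replacing $s$ by $-s$ (equivalently, letting $r=1+4bs$ become negative) sends $1+4bs$ to $1-4bs=-(4bs-1)$. Because $2k$ is even this leaves $D=(4bs-1)^{2k}+(b(4bs-1)^{2k}-s)^2$ and $a_0=b(4bs-1)^{2k}-s>0$, while each partial quotient $2b(1+4bs)^{j}$ becomes $(-1)^{j}\,2b(4bs-1)^{j}$. The two exponents $k-1-n$ and $k+n$ appearing in $\vec{N}_n$ differ by the odd number $2n+1$, so each $\vec{N}_n$ (and each $\overset{\leftarrow}{N}_n$) now contains exactly one negative entry; the periodic part is thus a string of alternating-sign partial quotients. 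I would then clear these by applying the second identity of~\eqref{rm0-}, $[m,-n,\alpha]=[m-1,1,n-1,-\alpha]$, from left to right. The opening pair $2b,\,-2b(4bs-1)^{2k-1}$ becomes $2b-1,\,1,\,2b(4bs-1)^{2k-1}-1$, which is precisely the first block of the claimed expansion, and the accompanying sign flip of the tail presents the next positive-then-negative pair for identical treatment.

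For the period length, the even branch supplies $4k+1$ terms ($2k$ pairs coming from the $\vec{N}$'s and $\overset{\leftarrow}{N}$'s, plus the lone closing term $2x$). Exactly one application of~\eqref{rm0-} is used per pair, turning its two entries into a triple $m-1,\,1,\,n-1$; hence $2k$ applications each lengthen the period by one and $l(D)=(4k+1)+2k=6k+1$. Every resulting partial quotient is a nonnegative integer because $b,s\ge 1$ force $4bs-1\ge 3$, so each $2b(4bs-1)^{j}-1\ge 1$ and $2b-1\ge 1$; the closing term $2x=2(b(4bs-1)^{2k}-s)$ is positive since $(4bs-1)^{2k}\ge(4bs-1)^2>s$.

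The step I expect to be the main obstacle is the cumulative sign bookkeeping, together with the two boundary regions. Each application flips the sign of the entire tail, so one must verify that at every stage the pair being processed again reads positive-then-negative; this is checked by comparing the running factor $(-1)^{(\text{number of applications so far})}$ against the intrinsic sign $(-1)^{j}$ of each entry, which makes the forward blocks $2b(4bs-1)^{k-1-n}-1,\,1,\,2b(4bs-1)^{k+n}-1$ and the backward blocks $2b(4bs-1)^{k+n}-1,\,1,\,2b(4bs-1)^{k-1-n}-1$ emerge correctly. At the midpoint the adjacent factors $\vec{N}_0$ and $\overset{\leftarrow}{N}_0$ must produce the doubled central entry $2b(4bs-1)^{k}-1$ appearing twice, and at the end $\overset{\leftarrow}{N}_{k-1}$ must yield $2b(4bs-1)^{2k-1}-1,\,1,\,2b-1$ followed by the closing $2x$. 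The closing term is flipped once by each of the $2k$ applications, an even number, so it indeed survives with positive sign; verifying these two boundaries explicitly is the delicate part, but is routine once the sign parity is tracked.
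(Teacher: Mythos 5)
Your proposal is correct and follows exactly the paper's (one-sentence) proof: apply Theorem~\ref{t2} in the even case, replace $s$ by $-s$, and use the second identity of~\eqref{rm0-} to clear the resulting negative partial quotients. Your sign bookkeeping and the count $l(D)=(4k+1)+2k=6k+1$ supply the details the paper leaves implicit, and they check out.
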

\begin{proof}
This follows from Theorem \ref{t2} in the case $k$ is even, after
replacing $s$ by $-s$ and using \eqref{rm0-} to remove the resulting
negative partial quotients from the resulting continued fraction
expansion
\end{proof}

\begin{corollary}\label{c7}
Let $b$,  $s$ and $k$ be positive integers. Set
\[
D:=\left(b (4 b s-1)^{2k+1}+s\right)^2-(4 b s-1)^{2k+1}.
\]
Then $l(D) =6k+5$ and {\allowdisplaybreaks
\begin{align*} \sqrt{D}=\bigg[ b (4 b &s-1)^{2
k+1}+s-1;\,\, \overline{1,\, 2 b-1,\, 2 b (4 b
s-1)^{2 k}-1,}\\
&\overline{1,\,2 b (4 b s-1)-1,\,2 b (4 b s-1)^{2 k-1}-1 ,}\\
& \overline{1,\, 2 b (4 b s-1)^2-1,\,2 b (4 b s-1)^{2 k-2}-1,}\\
&\phantom{sadasdsadfsa}\dots\\
&\overline{1,\, 2 b (4 b s-1)^{k-1}-1,\,2 b (4 b s-1)^{k+1}-1,}\\
&\overline{1,\,2 b (4 b s-1)^k-1, \,2 b (4 b s-1)^{k}-1,1,\,}\\
&\overline{2 b (4 b s-1)^{k+1}-1, \,2 b (4 b s-1)^{k-1}-1,1,\,}\\
&\phantom{sadasdsadfsa}\dots,\\
&\overline{2 b (4 b s-1)^{2 k-2}-1,\, 2 b (4 b s-1)^2-1,1,\,}\\
&\overline{2 b (4 b s-1)^{2 k-1}-1,\,2 b (4 b s-1)-1,1,\,}\\
&\overline{ 2 b (4 b s-1)^{2 k}-1,\,\,2 b-1,\,1,\,}\overline{ 2( b
(4 b s-1)^{2 k+1}+s-2)}\bigg ].
\end{align*}
}
\end{corollary}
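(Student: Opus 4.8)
The plan is to obtain this corollary from Theorem~\ref{t2} by a single sign substitution, exactly as Corollary~\ref{c6} was obtained from the even case of that theorem. Since the target radicand carries the factor $(4bs-1)^{2k+1}$ with a minus sign, I would start from Theorem~\ref{t2} with its parameter taken to be $2k+1$ (the odd case, treated via Proposition~\ref{p3}), whose radicand is $(4bs+1)^{2k+1}+(b(4bs+1)^{2k+1}+s)^2$ and whose period has length $2(2k+1)+1=4k+3$. The base $4bs+1$ becomes $-(4bs-1)$ under negation of either variable, and because the exponent $2k+1$ is odd this turns the isolated term $(4bs+1)^{2k+1}$ into $-(4bs-1)^{2k+1}$, producing precisely $D=(b(4bs-1)^{2k+1}+s)^2-(4bs-1)^{2k+1}$.

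The key point distinguishing this from Corollary~\ref{c6} is the choice of which variable to negate. In the even case $s\mapsto -s$ keeps the integer part positive, but here the odd power flips the sign of $b(4bs+1)^{2k+1}$, so to keep the integer part $b(4bs-1)^{2k+1}+s$ positive I would instead substitute $b\mapsto -b$ (the analogue of the substitution $a\mapsto -a$ used for the odd case of Theorem~\ref{t1b}). Under $b\mapsto -b$ each partial quotient $2b(4bs+1)^{j}$ becomes $2b(-1)^{j+1}(4bs-1)^{j}$, so entries of even exponent turn negative and those of odd exponent stay positive; since the two entries of each $\vec{N}_n$ have exponents differing by an even amount they share a sign, and the period therefore splits into same-sign pairs whose block-sign alternates. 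The leading term is $b(4bs-1)^{2k+1}+s$ and the trailing term is $2(b(4bs-1)^{2k+1}+s)$, as expected for a genuine $\sqrt{D}$ expansion before cleanup.

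I would then clear the negative partial quotients by repeated use of the second identity in \eqref{rm0-}, namely $[m,-n,\alpha]=[m-1,1,n-1,-\alpha]$. Each application replaces the single negative entry by the two positive entries $1,\,n-1$, decrements the preceding quotient by $1$, and flips the sign of the entire remaining tail; applied from left to right it cascades through the period, decrementing every $2b(4bs-1)^{j}$ to $2b(4bs-1)^{j}-1$ and inserting the $1$'s visible in the stated expansion. Among the block-defining exponents $0,1,\dots,2k$ there are $k+1$ even values, each contributing two even-exponent entries, so exactly $2k+2$ negative entries must be removed; since each removal lengthens the expansion by one, the period grows from $4k+3$ to $4k+3+(2k+2)=6k+5$, which gives $l(D)=6k+5$.

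The main obstacle I anticipate is bookkeeping the sign-flip cascade cleanly, especially at the two ends of the period. The first clearance decrements the integer part to $b(4bs-1)^{2k+1}+s-1$ and promotes a leading $1$ into the period, while the cascade reaching the far end modifies the trailing quotient to $2(b(4bs-1)^{2k+1}+s-2)$ rather than the naive $2a_0$; in between, each tail-sign flip must be tracked so that every entry resolves to a positive integer and each $\vec{N}_n$--pair lands on the stated $(1,\,\cdot-1,\,\cdot-1)$ or $(\cdot-1,\,\cdot-1,\,1)$ pattern. Verifying that these boundary terms and the interior signs match the explicit target is the delicate part; once the cascade is checked against the stated expansion, the identity for $\sqrt{D}$ follows exactly as in Theorem~\ref{t2}.
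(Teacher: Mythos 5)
Your proposal is correct and follows the paper's proof exactly: the paper also derives this corollary from the odd-$k$ case of Theorem~\ref{t2} by the substitution $b\mapsto -b$ and then clears the resulting negative partial quotients with the second identity in \eqref{rm0-}. Your sign analysis (even-exponent entries turn negative, $2k+2$ of them, lengthening the period from $4k+3$ to $6k+5$) fills in the bookkeeping that the paper leaves implicit.
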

\begin{proof}
This follows from Theorem \ref{t2} in the case $k$ is odd, after
replacing $b$ by $-b$ and using \eqref{rm0-} to remove the
resulting negative partial quotients from the resulting continued
fraction expansion.
\end{proof}

Remark: Statements similar to those in Corollaries \ref{c6} and
\ref{c7} hold if $2k$ is replaced by $2k+1$, but these results
cannot be derived from Theorem \ref{t2}. All of these statements,
without explicit continued fraction expansions, appear in Table 2
of \cite{W85}.

\section{Fundamental Units in Real Quadratic Fields}

It is straightforward in many cases to compute the fundamental
units in the real quadratic fields corresponding to the surds in
the various theorems and corollaries. In what follows, we assume
$D$ is square free and $D \not \equiv 5 (\mod{8})$.

In Theorem \ref{t1}, for example,  where
\[D=m(2(1 + 2a m s)^k + m(s + a(1 + 2 a m s)^k)^2),\] it follows
directly from Proposition \ref{p1} that the fundamental unit in
$\mathbb{Q}(\sqrt{D})$ is
\[
\frac{2\,m\,{\left( 1 + a\,\left( {\sqrt{D}} +
           m\,\left( s + a\,{\left( 1 + 2\,a\,m\,s \right) }^k \right)  \right)
        \right) }^{2\,k}}{{\left( {\sqrt{D}} -
       m\,\left( s + a\,{\left( 1 + 2\,a\,m\,s \right) }^k \right)  \right)
       }^2}.
\]
With $m = 3$, $a =5$, $s =7$ and $k = 2$, for example, we get
$D=446005190022$ and that the fundamental unit in
$\mathbb{Q}(\sqrt{446005190022})$ is (after simplifying)
\begin{multline*}
149199899813252915906267542273 \\+
223407925198820626278032\sqrt{446005190022}.
\end{multline*}

Likewise, in Theorem \ref{t2} (for even $k$), where
\[
D= (1 + 4bs)^{2k} + (s + b(1 + 4bs)^{2k})^2,
\]
Proposition \ref{p2} gives that the fundamental unit in
$\mathbb{Q}(\sqrt{D})$ is {\allowdisplaybreaks
\begin{multline*}
\frac{1}{{\left( 1 + 4bs \right) }^{2k}}\left( s + b{\left( 1 +
4bs \right) }^{2k} +
      {\sqrt{{\left( 1 + 4bs \right) }^{2k} +
          {\left( s + b{\left( 1 + 4bs \right) }^{2k} \right) }^2}} \right)
         \\
   \times   \bigg( -2\,b\,\left( s + b\,{\left( 1 + 4\,b\,s \right) }^{2\,k} \right)  +
        \left( 1 + 4\,b\,s \right) \,
         \left( 1 + 4\,b^2\,{\left( 1 + 4\,b\,s \right) }^{-1 + 2\,k} \right)
         \\
       + 2\,b\,{\sqrt{{\left( 1 + 4\,b\,s \right) }^{2\,k} +
             {\left( s + b\,{\left( 1 + 4\,b\,s \right) }^{2\,k} \right) }^2}} \,\bigg)
        ^{2\,k}.
\end{multline*}
} Setting $s = 3$, $b = 5$, and $k =2$, for example, gives  $D =
4792683254153105$ and that the fundamental unit in
$\mathbb{Q}(\sqrt{4792683254153105})$ is
\begin{multline*}
18375851029288260766491636025698114848 +\\
265434944781468068474213871001
   \sqrt{4792683254153105}.
\end{multline*}

For our last example, we also consider Theorem \ref{t2} (for odd
$k$), where
\[
D= (1 + 4bs)^{2k+1} + (s + b(1 + 4bs)^{2k+1})^2,
\]
Proposition \ref{p3} gives that the fundamental unit in
$\mathbb{Q}(\sqrt{D})$ is {\allowdisplaybreaks
\begin{multline*}
\frac{1}{{\left( 1 + 4\,b\,s \right) }^{2\,k}}
  {\left( 1 + 2\,b\,{\sqrt{D}} + 2\,b\,s +
      2\,b^2\,{\left( 1 + 4\,b\,s \right) }^{1 + 2\,k} \right)
      }^{2\,k}\times\\
   \left( {\sqrt{D}} + s + 4\,b^3\,{\left( 1 + 4\,b\,s \right) }^{1 + 4\,k} +
   b\,{\left( 1 + 4\,b\,s \right) }^{2\,k}\,
    \left( 3 + 4\,b\,\left( {\sqrt{D}} + 2\,s \right)  \right)\right).
\end{multline*}
} Letting $s = 5$, $b = 2$, and $k =1$ gives  $D = 19001864330$
and shows that the fundamental unit in
$\mathbb{Q}(\sqrt{19001864330})$ is
\[
2682318982172034563 + 19458632525153\sqrt{19001864330}.
\]

\section{Concluding Remarks}
While considering the various long continued fractions in the
papers by Bernstein \cite{B76}, \cite{B76a},  Levesque and
 Rhin \cite{LR86} and Williams \cite{W85}, and trying to see if any more
 of the continued fractions in these papers could be generalized
 by the methods in the present paper, we were
 led to a new construction.

 This new construction succeeds where the methods in the present paper fail,
 in that it allowed us to generalize some more of the continued fractions in
 the papers mentioned above.

 We will investigate this new construction in
 a subsequent paper.

 \allowdisplaybreaks{

}
\end{document}